\documentclass[11pt]{article}

\usepackage{xcolor}
\usepackage[numbers,sort&compress]{natbib}
\usepackage{setspace}
\PassOptionsToPackage{colorlinks=true,allcolors=blue}{hyperref}
\usepackage{definitions}
\usepackage{todonotes}
\newtheorem{theorem}[prop]{Theorem}
\pgfplotsset{compat=1.17}
\DeclareMathOperator{\proj}{proj}
\DeclareMathOperator{\prox}{prox}
\DeclareMathOperator*{\argmin}{arg\,min}
\newcommand{\R}{\RR}
\newcommand{\ip}[2]{\langle #1, #2\rangle}
\newcommand{\eps}{\varepsilon}

\title{cuPDQPx: A GPU-Accelerated Restarted Halpern PDHG for Large-Scale Convex Quadratic Programming}

\author{Haihao Lu\thanks{MIT, Sloan School of Management (haihao@mit.edu).} \and Zedong Peng\thanks{MIT, Sloan School of Management (zdpeng@mit.edu).} \and Jinwen Yang\thanks{University of Chicago, Department of Statistics (jinweny@uchicago.edu).}}
\date{}

\begin{document}
\maketitle

\begin{abstract}
We present cuPDQPx, a GPU-accelerated, factorization-free first-order solver for large-scale convex quadratic programming (QP).
Building on the restarted reflected Halpern primal--dual framework of cuPDLPx, we introduce an auxiliary-variable splitting that yields explicit updates without linearizing the quadratic objective or solving inner quadratic subproblems.
A preconditioned proximal point interpretation yields both sublinear and linear fixed-point residual convergence guarantees. 
For practical efficiency, cuPDQPx integrates diagonal preconditioning, step-size selection, adaptive restart, and adaptive primal and quadratic weight updates.
cuPDQPx is implemented in C and CUDA with optimized sparse matrix--vector products, kernel fusion, and CUDA Graphs.
Numerical experiments on the Maros--M\'esz\'aros collection, a convex subset of QPLIB, and synthetic large-scale QPs demonstrate that cuPDQPx has strong numerical performance at both $10^{-4}$ and $10^{-8}$ tolerances.
\end{abstract}

\section{Introduction}
\label{sec:introduction}

Convex quadratic programming (QP) is a fundamental class of
optimization problems with wide applications in machine learning,
control, finance, and operations research. Examples include support
vector machines, constrained least squares, model predictive control,
and portfolio optimization \citep{stellato2020osqp,lu2026pdqp}.
Classical approaches, including active-set and interior-point methods,
provide reliable solutions with high accuracy.
However, their computational bottleneck is solving linear systems,
which can become expensive on large instances.
Moreover, sparse matrix factorizations may introduce substantial
fill-in and require considerably more memory than the original
problem data. These limitations have motivated the development
of first-order methods for large-scale QP.

Recently, factorization-free first-order methods have emerged
as a promising approach to large-scale optimization.
The main computational cost of these methods lies in
matrix--vector multiplication rather than matrix factorization,
making them well suited to modern parallel architectures
such as graphics processing units (GPUs).
For linear programming (LP), PDLP and its GPU implementation \citep{applegate2021pdlp,lu2025cupdlpjl, lu2023cupdlpc}
demonstrates the effectiveness of combining restarted
primal--dual hybrid gradient (PDHG) with practical enhancements.
Building on restarted Halpern PDHG
\citep{lu2024halpern}, cuPDLPx \citep{lu2025cupdlpx}
combines algorithmic enhancements with an efficient GPU
implementation. For convex QP, solvers such as
PDQP \citep{lu2026pdqp}, PDHCG \citep{huang2025pdhcg}, and
HPR-QP \citep{chen2025hprqp} combine factorization-free primal--dual methods with
GPU acceleration and have demonstrated promising
performance on standard benchmarks and large-scale instances.

The main computational obstacle in extending LP-oriented PDHG solvers to QP is the
quadratic objective: a direct application can replace the inexpensive box projections
of LP with coupled quadratic subproblems (see Section \ref{sec:vanilla-pdhg} for a more detailed discussion).
A unifying perspective is that splitting methods such as PDHG and ADMM are
instances of the preconditioned proximal point method \citep{lu2023unified}.
The choice of splitting and proximal metric determines the subproblems and computational
cost of each iteration. This perspective motivates a splitting that retains the quadratic
objective while yielding inexpensive explicit updates.

In this paper, we propose a new PDHG-based method
for convex QP based on an auxiliary-variable
saddle-point formulation.
This equivalent reformulation yields explicit updates without linearizing the quadratic
objective or requiring an inner subproblem solve. We combine this splitting with the
restarted reflected Halpern framework underlying cuPDLPx \citep{lu2025cupdlpx}.
We implement the proposed method in cuPDQPx,
a GPU-accelerated QP solver.
The contributions of this paper are summarized as follows.

\begin{enumerate}
    \item \textbf{A restarted Halpern primal--dual method for convex QP.}
    We develop an auxiliary-variable splitting with explicit
    updates for general convex QP.
    We establish a preconditioned proximal point interpretation
    and a sublinear fixed-point residual bound for the reflected
    Halpern iteration under fixed parameters and a fixed anchor. We further establish linear convergence of the restart anchors under a residual-based adaptive restart rule with fixed parameters.

    \item \textbf{Practical algorithmic enhancements.}
    We incorporate preconditioning, adaptive restart, and
    adaptive primal and quadratic weights to balance the
    primal, auxiliary, and constraint-dual blocks.
    We derive movement-based target weights from the
    proximal metric, accounting for the dependence of
    the step size on the quadratic weight.

    \item \textbf{Explicit treatment of diagonal QP.}
    We describe the diagonal-$Q$ specialization, where the primal
    proximal update has a coordinatewise closed form and PDHG
    applies directly without an auxiliary variable.
    We detail the simplified algorithmic enhancements and provide
    a dedicated implementation that is selected automatically
    when $Q$ is diagonal.

    \item \textbf{Efficient GPU implementation.}
    We implement the proposed method in CUDA C using
    optimized sparse matrix--vector products, kernel fusion,
    and CUDA Graphs.
    Experiments on the Maros--M\'esz\'aros collection,
    a convex subset of QPLIB, and synthetic QPs demonstrate
    competitive performance relative to existing CPU and GPU
    first-order solvers.
    On the synthetic set, cuPDQPx achieves the lowest
    shifted geometric mean of running time among the
    tested solvers at tolerances $10^{-4}$ and $10^{-8}$.
\end{enumerate}

To motivate the proposed splitting, we first introduce the QP formulation and notation,
then examine the computational challenges of applying PDHG to QP and existing approaches
to these challenges. We also review related approaches.

\subsection{Problem formulation and notation}
\label{sec:qp-formulation}
We consider the convex quadratic program
\begin{equation}
\label{eq:qp}
\begin{aligned}
\min_{x\in\R^n}\quad & \tfrac12 x^\top Q x + c^\top x\\
\text{s.t.}\quad & \ell_c \le Ax \le u_c,\\
& \ell_v \le x \le u_v,
\end{aligned}
\end{equation}
where $x\in\R^n$ is the decision variable. The objective is defined by a symmetric positive
semidefinite matrix $Q\in\R^{n\times n}$ and a vector $c\in\R^n$. The constraints are specified
by the matrix $A\in\R^{m\times n}$, the constraint bounds $\ell_c,u_c$, and the variable bounds
$\ell_v,u_v$.
For notational convenience, we denote the variable box by
$X:=\{x: \ell_v\le x\le u_v\}$, with
$\ell_v\in(\R\cup\{-\infty\})^n$ and $u_v\in(\R\cup\{+\infty\})^n$,
and the constraint box by $S:=\{s:\ell_c\le s\le u_c\}$, with
$\ell_c\in(\R\cup\{-\infty\})^m$ and $u_c\in(\R\cup\{+\infty\})^m$.

A Wolfe-type dual formulation of~\eqref{eq:qp}
\citep{wolfe1961duality} is given by
\begin{equation}
\label{eq:dual}
\begin{aligned}
\max_{x\in\R^n,\,y\in Y,\,r\in R}\quad
& -\tfrac12 x^\top Qx - p(-y;\ell_c,u_c) - p(-r;\ell_v,u_v)\\
\text{s.t.}\quad & Qx+c-A^\top y-r=0,
\end{aligned}
\end{equation}
where $p$ is the support function of $[\ell,u]$, defined by
\begin{equation}
\label{eq:pfun}
p(r;\ell,u):= u^\top r^+ - \ell^\top r^-,\qquad r^+=\max\{r,0\},\ r^-=\max\{-r,0\},
\end{equation}
with the convention $0\cdot(\pm\infty)=0$. This dual is used in the termination check.

The variables $y$ and $r$ are the multipliers associated with the constraints
$\ell_c\le Ax\le u_c$ and the variable bounds $\ell_v\le x\le u_v$, respectively.
The sets $Y:=\prod_{i=1}^m Y_i\subseteq\R^m$ and
$R:=\prod_{i=1}^n R_i\subseteq\R^n$ encode the multiplier sign restrictions, with
\begin{equation}
\label{eq:dual-domains}
Y_i:=\begin{cases}
\{0\}, & \ell_{c,i}=-\infty,\ u_{c,i}=+\infty,\\
\R^-, & \ell_{c,i}=-\infty,\ u_{c,i}\in\R,\\
\R^+, & \ell_{c,i}\in\R,\ u_{c,i}=+\infty,\\
\R, & \text{otherwise},
\end{cases}
\qquad
R_i:=\begin{cases}
\{0\}, & \ell_{v,i}=-\infty,\ u_{v,i}=+\infty,\\
\R^-, & \ell_{v,i}=-\infty,\ u_{v,i}\in\R,\\
\R^+, & \ell_{v,i}\in\R,\ u_{v,i}=+\infty,\\
\R, & \text{otherwise}.
\end{cases}
\end{equation}
Here, $\R^+$ and $\R^-$ denote the nonnegative and nonpositive real numbers, respectively.

Equivalently, the primal problem~\eqref{eq:qp} can be expressed in saddle-point form as
\begin{equation}
\label{eq:saddle}
\min_{x\in X}\ \max_{y\in Y}\ \mathcal{L}(x,y):=\tfrac12 x^\top Qx + c^\top x - y^\top Ax - p(-y;\ell_c,u_c),
\end{equation}
whose optimality conditions are
\begin{equation}
\label{eq:kkt}
Qx + c - A^\top y - r = 0,\qquad -r\in N_X(x),\qquad -y\in N_S(Ax),
\end{equation}
where $N_X$ and $N_S$ denote the outward normal cones of $X$ and $S$, respectively.

\subsection{PDHG for QP: Challenges and Existing Approaches}
\label{sec:vanilla-pdhg}

To examine the cost of applying PDHG directly to~\eqref{eq:saddle}, we first recall
the method \citep{chambolle2011first} for convex--concave saddle-point problems of the form
\[
\min_x\ \max_y\quad f(x)+\ip{Kx}{y}-g(y),
\]
where $f,g$ are proper closed convex functions and $K$ is linear.
For step sizes $\tau,\sigma>0$ and symmetric positive semidefinite proximal matrices
$M_x,M_y$, write $\norm{u}_M^2=u^\top Mu$. The PDHG updates are
\begin{equation}
\label{eq:general-pdhg}
\begin{aligned}
x^{k+1}&\in\argmin_x\left\{f(x)+\ip{Kx}{y^k}
+\tfrac{1}{2\tau}\norm{x-x^k}_{M_x}^2\right\},\\
y^{k+1}&\in\argmin_y\left\{g(y)-\ip{K(2x^{k+1}-x^k)}{y}
+\tfrac{1}{2\sigma}\norm{y-y^k}_{M_y}^2\right\}.
\end{aligned}
\end{equation}

Applying these updates to~\eqref{eq:saddle} directly with $M_x=M_y=I$ gives
\begin{subequations}
\label{eq:pdhg-prox}
\begin{align}
x^{k+1} &= \argmin_{x\in X}\left\{\tfrac12 x^\top Qx+c^\top x-\ip{y^k}{Ax}
+\tfrac{1}{2\tau}\norm{x-x^k}_2^2\right\}, \label{eq:proxsub}\\
y^{k+1} &= y^k-\sigma A(2x^{k+1}-x^k)
+\sigma\proj_S\!\big(A(2x^{k+1}-x^k)-\sigma^{-1}y^k\big). \label{eq:pdhg-dual}
\end{align}
\end{subequations}
If a saddle point exists, fixed step sizes satisfying
$\tau\sigma\norm{A}_2^2<1$ ensure convergence to a saddle point of~\eqref{eq:saddle}.
For LP ($Q=0$), the primal update~\eqref{eq:proxsub} reduces to a box projection.
For nondiagonal $Q$, the quadratic term couples the primal coordinates, and the same
update generally requires solving a quadratic subproblem. When $X=\R^n$, its optimality
condition is the linear system
\[
(I+\tau Q)x^{k+1}=x^k+\tau(A^\top y^k-c).
\]
With variable bounds, it is a box-constrained QP. Solving these subproblems can require
matrix factorizations or inner iterations, increasing the per-iteration cost relative to LP.
The dual update~\eqref{eq:pdhg-dual} still uses a projection onto the constraint box $S$.
When $Q$ is diagonal, the primal update also separates across coordinates and admits a
closed form; we develop this specialization in Section~\ref{app:diagonal}.

Existing factorization-free GPU solvers handle the quadratic objective in different ways.
PDQP \citep{lu2026pdqp} linearizes the quadratic term in~\eqref{eq:proxsub} at a weighted
combination of the current primal iterate and its running average, and combines the resulting
explicit gradient update with acceleration and restarting.
PDHCG \citep{huang2025pdhcg} solves~\eqref{eq:proxsub} inexactly, using conjugate gradients
when no variable bound is finite and projected gradient with Barzilai--Borwein steps otherwise.
HPR-QP \citep{chen2025hprqp} uses a different splitting framework based on a certain dual
formulation and Halpern Peaceman--Rachford iteration.
In Section~\ref{sec:framework}, we instead introduce an auxiliary-variable splitting that
yields explicit updates without linearizing the quadratic objective or solving~\eqref{eq:proxsub}.

\subsection{Related literature}
\label{sec:related}

The preceding discussion focuses on the treatment of the quadratic objective.
We now review the broader operator-splitting frameworks and convergence results
relevant to cuPDQPx.

\paragraph{ADMM-based solvers.}
OSQP \citep{stellato2020osqp} and SCS
\citep{odonoghue2016scs} are widely used first-order
solvers based on operator splitting.
OSQP applies ADMM directly to convex QPs, while SCS
applies ADMM to a homogeneous self-dual embedding of
conic programs.
Both methods require solving a structured linear system at each iteration.
There are two standard approaches to these solves:
direct methods compute and reuse a matrix factorization
when the system matrix remains unchanged, while
indirect methods use iterative linear solvers such
as conjugate gradients.
The direct approach benefits from factorization reuse
but may incur substantial memory costs.
The indirect approach avoids matrix factorization
but requires additional matrix--vector products
within each outer iteration.
In contrast, cuPDQPx does not require a linear-system solve and 
each iteration consists only of sparse matrix--vector products
and vector operations.

\paragraph{PDHG-based solvers.}
Recent progress on PDHG-based solvers has been driven 
in part by developments in large-scale LP.
PDLP \citep{applegate2021pdlp} combines restarted averaged
PDHG with practical enhancements including preconditioning,
adaptive step sizes, and primal-weight adjustment.
Subsequent work developed restarted Halpern PDHG and its
reflected variant \citep{lu2024halpern}, which provide
accelerated linear convergence guarantees for LP.
Building on this development, cuPDLPx
\citep{lu2025cupdlpx} employs restarted reflected Halpern
PDHG together with adaptive restart and primal-weight
updates in an efficient GPU implementation.
For convex QP, the restarted accelerated PDHG method underlying
PDQP \citep{lu2026pdqp} achieves a linear convergence
rate matching an information-theoretic lower bound for
a broad class of primal--dual first-order methods.
PDHCG-CQP \citep{li2026pdhcgcqp} extends PDHCG \citep{huang2025pdhcg} to conic QP,
including second-order, exponential, and power cone constraints.

\paragraph{Halpern Peaceman--Rachford-based solvers.}
HPR-QP \citep{chen2025hprqp} develops a dual
Halpern Peaceman--Rachford method for large-scale
convex composite QP.
It builds on a restricted Wolfe dual and uses a
symmetric Gauss--Seidel technique to simplify the
subproblem updates.
HPR-QP is closely related to the present work through
its use of Halpern-type fixed-point iterations for
large-scale convex QP.
cuPDQPx applies restarted reflected Halpern iteration to the PDHG operator
derived from its auxiliary-variable saddle-point formulation.
Additionally, the connection between HPR-LP \citep{chen2026hprlp} and the restarted Halpern
PDHG underlying cuPDLPx is discussed in
\citep{lu2025cupdlpx}.

\subsection{Paper organization}

The remainder of this paper is organized as follows.
Section~\ref{sec:framework} presents the proposed
splitting, its proximal point interpretation, and
the reflected Halpern residual bound.
Section~\ref{sec:enhancements} describes the practical
algorithmic enhancements.
Section~\ref{app:diagonal} specializes the method to diagonal quadratic objectives.
Section~\ref{sec:experiments} presents the GPU
implementation and computational experiments.
Section~\ref{sec:conclusion} concludes the paper.

\section{Auxiliary-Variable Splitting and Restarted Halpern PDHG}
\label{sec:framework}

We now develop an auxiliary-variable splitting of~\eqref{eq:saddle} that avoids the
quadratic subproblem~\eqref{eq:proxsub}. We first derive explicit primal--dual updates
and their preconditioned proximal point interpretation, then apply restarted reflected
Halpern iteration and establish a fixed-point residual bound.

\subsection{Primal--dual splitting scheme}
\label{sec:scheme}

Since $Q\succeq0$, introducing an auxiliary variable $w\in\R^n$ gives the representation
\[
\tfrac12 x^\top Qx
=\max_{w\in\R^n}\left\{x^\top Qw-\tfrac12 w^\top Qw\right\}.
\]
The maximum is attained whenever $Qw=Qx$, and in particular at $w=x$.
Thus~\eqref{eq:saddle} is equivalent to
\begin{equation}
\label{eq:augmented-saddle}
\min_{x\in X}\ \max_{w\in\R^n,\,y\in Y}\quad
\widetilde{\mathcal{L}}(x,w,y):=c^\top x+x^\top Qw
-\tfrac12 w^\top Qw-y^\top Ax-p(-y;\ell_c,u_c).
\end{equation}
Let $\tau,\sigma,\beta>0$ be the step sizes for $x$, $y$, and $w$, respectively.
We apply the PDHG updates~\eqref{eq:general-pdhg} to~\eqref{eq:augmented-saddle}, treating $x$ as the primal
variable and $(w,y)$ as the dual block, with proximal matrices $I$,
$Q$, and $I$ for $x$, $w$, and $y$, respectively.

In particular, writing $\norm{v}_Q^2=v^\top Qv$,
the dual update over $w$ solves
\[
w^{k+1}\in\argmin_{w\in\R^n}
\left\{\tfrac12 w^\top Qw-(2x^{k+1}-x^k)^\top Qw
+\tfrac{1}{2\beta}\norm{w-w^k}_Q^2\right\}.
\]
Its optimality condition is $Q((1+\beta)w^{k+1}-w^k-\beta (2x^{k+1}-x^k))=0$,
which is satisfied by the explicit averaging update below, including when $Q$ is singular.
The primal proximal step gives a projection onto $X$. For the $y$-update, let
$h(y):=p(-y;\ell_c,u_c)$ and $v^k:=y^k-\sigma A(2x^{k+1}-x^k)$.
Completing the square in the dual proximal subproblem gives
\[
y^{k+1}=\argmin_{y\in\R^m}
\left\{h(y)+\tfrac{1}{2\sigma}\norm{y-v^k}_2^2\right\}
=\prox_{\sigma h}(v^k).
\]
Since $h$ is the support function of $-S$, Moreau's identity yields
\[
\prox_{\sigma h}(v)=v-\sigma\proj_{-S}(v/\sigma)
=v+\sigma\proj_S(-v/\sigma).
\]
Substituting $v=v^k$ gives the $y$-update below; the domain of $h$ already encodes $Y$.
Thus, the PDHG updates on \eqref{eq:augmented-saddle} can be written as
\begin{subequations}
\label{eq:iter}
\begin{align}
x^{k+1} &= \proj_X\!\big(x^k-\tau\,(c + Qw^k - A^\top y^k)\big), \label{eq:iter-x}\\
w^{k+1} &= \frac{1}{1+\beta}\,w^k+\frac{\beta}{1+\beta}\,(2x^{k+1}-x^k). \label{eq:iter-w}\\
y^{k+1} &= y^k-\sigma\Big(A(2x^{k+1}-x^k)
-\proj_{S}\big(A(2x^{k+1}-x^k)-\sigma^{-1}y^k\big)\Big), \label{eq:iter-y}
\end{align}
\end{subequations}
The primal update~\eqref{eq:iter-x} is a box projection and requires no inner solver.
With $z^k=(x^k,w^k,y^k)$, we write $z^{k+1}=T(z^k)$ to denote one preconditioned PDHG step
defined by~\eqref{eq:iter}.
Here, $w$ is an auxiliary variable used internally by the algorithm, which returns only the
primal--dual pair $(x,y)$ for the original QP.

\paragraph{Proximal point interpretation.}
To derive a step-size condition and establish firm nonexpansiveness of $T$, we express
the iteration~\eqref{eq:iter} as a preconditioned proximal point step.
For $z=(x,w,y)$, the first-order saddle-point optimality conditions associated
with~\eqref{eq:augmented-saddle} are $0\in\mathcal{F}(z)$, where
\begin{equation}
\label{eq:augmented-operator}
\mathcal{F}(z)=\mathcal{F}(x,w,y)=
\begin{pmatrix}
c+Qw-A^\top y+\partial\iota_X(x)\\
Q(w-x)\\
Ax+\partial[p(-\cdot;\ell_c,u_c)](y)
\end{pmatrix},
\end{equation}
where $\iota_X$ is zero on $X$ and $+\infty$ elsewhere, and $\partial$ denotes the convex
subdifferential.

Following the proximal point interpretation of PDHG in \cite[Section~2]{lu2023unified},
the updates~\eqref{eq:iter} satisfy
\begin{equation}
\label{eq:ppm}
P(\tau,\sigma,\beta)\,(z^k-z^{k+1})\in\mathcal{F}(z^{k+1}),
\end{equation}
with the symmetric matrix
\begin{equation}
\label{eq:P}
P(\tau,\sigma,\beta)=
\begin{pmatrix}
\frac{1}{\tau}I & -Q & A^\top\\[2pt]
-Q & \frac{1}{\beta}Q & 0\\[2pt]
A & 0 & \frac{1}{\sigma}I
\end{pmatrix}.
\end{equation}

The convergence analysis in Section~\ref{sec:halpern} relies on the proximal point
relation~\eqref{eq:ppm} under the stated step-size conditions and allows $P$ to be positive semidefinite.
If $Q$ is singular, then $P$ is necessarily singular, and~\eqref{eq:ppm} leaves the component
of $w^{k+1}$ in $\ker Q$ undetermined. The explicit update~\eqref{eq:iter-w} selects this
component while ensuring that~\eqref{eq:ppm} holds; uniqueness of the $w$-subproblem solution
is not required for the subsequent analysis.
The fixed points of $T$ are exactly $(x,x,y)$ with $(x,y)$ a saddle point
of~\eqref{eq:saddle}.
Since $w$ is an auxiliary variable and the algorithm returns only $(x,y)$, singularity of $Q$
does not preclude convergence of the returned primal--dual variables under the stated step-size
conditions. We discuss this point in more detail in Section~\ref{sec:halpern}.

Proposition~\ref{prop:psd} characterizes when $P\succeq0$ and establishes firm
nonexpansiveness in the induced seminorm.

\begin{prop}
\label{prop:psd}
Let $Q\succeq0$ and $\tau,\sigma,\beta>0$. Then $P(\tau,\sigma,\beta)\succeq0$ if and only if
\begin{equation}
\label{eq:stepcond}
\big\|\tau\beta Q+\tau\sigma A^\top A\big\|_2\ \le\ 1 .
\end{equation}
Under this condition, the update map $T$ defined by~\eqref{eq:iter} is firmly nonexpansive
in the seminorm $\norm{v}_P:=\sqrt{\ip{v}{Pv}}$; that is, for all $z_1,z_2$,
\[
\norm{T(z_1)-T(z_2)}_P^2
\ \le\ \ip{T(z_1)-T(z_2)}{P(z_1-z_2)}.
\]
\end{prop}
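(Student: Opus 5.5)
The plan has two parts: characterize $P\succeq0$ via a Schur complement, then derive firm nonexpansiveness from monotonicity of $\mathcal{F}$ combined with the proximal point relation~\eqref{eq:ppm}.

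\emph{Step 1: positive semidefiniteness.} Order the blocks as $(w,y)$ first and $x$ last. The $(w,y)$ diagonal block is $D=\mathrm{diag}(\beta^{-1}Q,\sigma^{-1}I)\succeq0$, and the coupling block is $B=(-Q;\,A)$. Since $D$ may be singular (when $Q$ is), I will not take a Schur complement on that side. Instead I minimize the quadratic form directly: for fixed $x$,
\[
\min_{w,y}\ \tfrac1\beta w^\top Qw-2x^\top Qw+\tfrac1\sigma\norm{y}^2+2y^\top Ax .
\]
Completing the square in the $Q$-seminorm, $\tfrac1\beta w^\top Qw-2x^\top Qw=\tfrac1\beta\norm{w-\beta x}_Q^2-\beta x^\top Qx$, so the $w$-part has minimum $-\beta x^\top Qx$, attained at $w=\beta x$, for every $Q\succeq0$. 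Similarly the $y$-part has minimum $-\sigma\norm{Ax}^2$, attained at $y=-\sigma Ax$. Hence
\[
\inf_{w,y}\ z^\top Pz=x^\top\big(\tfrac1\tau I-\beta Q-\sigma A^\top A\big)x .
\]
Therefore $P\succeq0$ if and only if $\tfrac1\tau I\succeq\beta Q+\sigma A^\top A$. Because $\beta Q+\sigma A^\top A$ is symmetric PSD, this is equivalent to $\lambda_{\max}(\tau\beta Q+\tau\sigma A^\top A)\le1$, which is~\eqref{eq:stepcond}. The ``only if'' direction follows by plugging in the minimizing $(w,y)=(\beta x,-\sigma Ax)$ for an arbitrary $x$.

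\emph{Step 2: firm nonexpansiveness.} First, $\mathcal{F}$ is monotone. It is the sum of $\partial\iota_X\times\{0\}\times\partial h$ (monotone), the skew-symmetric linear map $(x,w,y)\mapsto(Qw-A^\top y,\,-Qx,\,Ax)$, and $(x,w,y)\mapsto(0,Qw,0)$, which is monotone because $Q\succeq0$. A constant $c$ does not affect monotonicity. Note that the second component of $\mathcal{F}$ is $Q(w-x)=Qw-Qx$, so this decomposition is exact.

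Next, I will verify~\eqref{eq:ppm} row by row. The $x$-row is the optimality condition of the projection step~\eqref{eq:iter-x}. The $w$-row says $\tfrac1\beta Q(w^k-w^{k+1})-Q(x^k-x^{k+1})=Q(w^{k+1}-x^{k+1})$, which rearranges to the stated $Q$-multiplied condition and is satisfied by~\eqref{eq:iter-w}. The $y$-row is the prox optimality condition for $y$.

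Now let $z_i^+=T(z_i)$ for $i=1,2$. Monotonicity of $\mathcal{F}$ together with $P(z_i-z_i^+)\in\mathcal{F}(z_i^+)$ gives
\[
\ip{P\big((z_1-z_1^+)-(z_2-z_2^+)\big)}{z_1^+-z_2^+}\ge0 .
\]
Rearranging, and using the symmetry of $P$, this is exactly $\norm{z_1^+-z_2^+}_P^2\le\ip{z_1^+-z_2^+}{P(z_1-z_2)}$.

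The main obstacle is handling singular $Q$ in Step 1. Completing the square in the $Q$-seminorm circumvents it, with no pseudo-inverse needed, since $x^\top Qw$ lies in the range of $Q$. Step 2 is routine, though one must be careful that $T$ is single-valued: $T$ is defined by the explicit formulas~\eqref{eq:iter}, and those formulas satisfy~\eqref{eq:ppm}.
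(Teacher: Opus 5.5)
Your proposal is correct and follows essentially the same route as the paper. Minimizing $z^\top Pz$ over $(w,y)$ by completing the square is exactly the generalized Schur complement of the $(w,w)$ block and then the $(y,y)$ block that the paper takes (the same completed-square identity reappears in the proof of Theorem~\ref{thm:linear-restart}), and your Step~2 is the paper's monotonicity-plus-\eqref{eq:ppm} argument, with the monotone decomposition of $\mathcal{F}$ and the row-by-row verification of \eqref{eq:ppm} spelled out in more detail than the paper gives.
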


\begin{proof}
The range condition for the generalized Schur complement of the $(w,w)$ block holds
because the coupling blocks $-Q$ and $0$ have ranges contained in $\mathrm{range}(Q)$.
Taking this complement and then the Schur complement of the $(y,y)$ block gives
\[
P\succeq0\quad\Longleftrightarrow\quad
\frac{1}{\tau}I-\beta Q-\sigma A^\top A\succeq0.
\]
Since $\tau\beta Q+\tau\sigma A^\top A\succeq0$, this is equivalent to~\eqref{eq:stepcond}.

The convex--concave structure of~\eqref{eq:augmented-saddle} implies that $\mathcal{F}$ is
monotone. Applying monotonicity to~\eqref{eq:ppm} at $z_1$ and $z_2$ gives the stated
firm nonexpansiveness inequality.
\end{proof}

For $Q=0$, condition~\eqref{eq:stepcond} reduces to the LP condition
$\tau\sigma\norm{A}_2^2\le1$.
The firm nonexpansiveness of $T$ is used in the analysis of the reflected Halpern scheme
in Section~\ref{sec:halpern}.

For diagonal $Q$, the primal subproblem~\eqref{eq:proxsub} admits a closed-form
solution, so no auxiliary variable is needed. The corresponding updates and
algorithmic enhancements are detailed in Section~\ref{app:diagonal}.

\subsection{Restarted Reflected Halpern scheme}
\label{sec:halpern}

Following the restarted reflected Halpern PDHG scheme in \citep{lu2024halpern}, also used in
cuPDLPx \citep{lu2025cupdlpx}, we apply Halpern iteration to the reflected operator $2T-I$,
with the initial point $z^{0}$ as the fixed anchor:
\begin{equation}
\label{eq:halpern}
z^{k+1}=\frac{k+1}{k+2}\Big(2T(z^k)-z^k\Big)+\frac{1}{k+2}\,z^{0},\qquad k=0,1,2,\dots,
\end{equation}
where $T$ denotes one step of~\eqref{eq:iter}.
Reflection is applied to all three blocks, including $w$ and $Qw$.

The following theorem applies the standard reflected Halpern analysis to this update map.

\begin{theorem}
\label{thm:rate}
Assume $Q\succeq0$ and that~\eqref{eq:qp} has a KKT point.
Let $T$ be defined by~\eqref{eq:iter} with fixed
$\tau,\sigma,\beta>0$ satisfying~\eqref{eq:stepcond}, and let $P$ be the matrix
in~\eqref{eq:P}. Then $T$ and $2T-I$ are nonexpansive in the seminorm $\norm{\cdot}_P$,
the fixed points of $T$ coincide with the KKT points of~\eqref{eq:qp} (in the $(x,y)$
components), and the iterates generated by the reflected Halpern update~\eqref{eq:halpern} satisfy
\begin{equation}
\label{eq:rate}
\norm{z^{k}-T(z^{k})}_P\ \le\ \frac{\norm{z^{0}-z^\star}_P}{k+1}
\end{equation}
for every fixed point $z^\star$ of $T$.
\end{theorem}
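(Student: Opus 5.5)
The proof has three parts: nonexpansiveness from Proposition~\ref{prop:psd}, the fixed-point/KKT correspondence from the operator $\mathcal{F}$, and the Halpern rate by a potential argument in the seminorm $\norm{\cdot}_P$.

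\textbf{Nonexpansiveness.} By Proposition~\ref{prop:psd}, condition~\eqref{eq:stepcond} gives $P\succeq0$ and firm nonexpansiveness of $T$ in $\norm{\cdot}_P$. Write $d=z_1-z_2$ and $e=T(z_1)-T(z_2)$. The polarization identity gives
\[
\norm{(2T-I)z_1-(2T-I)z_2}_P^2=4\norm{e}_P^2-4\ip{e}{Pd}+\norm{d}_P^2\le\norm{d}_P^2 .
\]
So $R:=2T-I$ is nonexpansive. Since $T=\tfrac12(I+R)$, $T$ is nonexpansive by the triangle inequality for the seminorm. This step uses only bilinearity and Cauchy--Schwarz for a PSD form, so singularity of $P$ causes no trouble.

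\textbf{Fixed points.} By~\eqref{eq:ppm}, $z=T(z)$ implies $0\in\mathcal{F}(z)$. Conversely, if $0\in\mathcal{F}(z)$ then $Q(w-x)=0$, and I check directly from~\eqref{eq:iter} that $T(z)=z$ whenever $w=x$. Because the first block of $\mathcal{F}$ involves $w$ only through $Qw=Qx$, the condition $0\in\mathcal{F}(z)$ reduces to~\eqref{eq:kkt}. I would take this as the meaning of the fixed points coinciding with KKT points in the $(x,y)$ components. For the second block, the normal-cone condition $-y\in N_S(Ax)$ corresponds to $Ax\in\partial h(y)$ by conjugacy of the support function. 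The existence of a KKT point then makes $\mathrm{Fix}(T)$ nonempty.

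\textbf{Rate.} This is the main obstacle. I would not cite a Euclidean result, but rerun the standard Halpern potential argument (Lieder/Kim style) for the nonexpansive map $R$ in the seminorm. Nonexpansiveness of $R$ is the only property needed, together with $R(z^\star)=z^\star$ for any fixed point $z^\star$ of $T$. The key identity is
\[
z^{k}-T(z^{k})=\tfrac12\big(z^k-R(z^k)\big),
\]
so it suffices to show
\[
\norm{z^k-R z^k}_P\le \frac{2\norm{z^0-z^\star}_P}{k+1}.
\]
This is exactly the known tight Halpern bound for stepsizes $\tfrac{1}{k+2}$. I would prove it by showing that
\[
V_k=k(k+1)\norm{z^k-Rz^k}_P^2/2+(k+1)\ip{z^k-Rz^k}{P(z^k-z^0)}
\]
is nonincreasing. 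The monotonicity follows by expanding the update and applying $\norm{Rz^{k+1}-Rz^k}_P^2\le\norm{z^{k+1}-z^k}_P^2$. For the final step I combine $V_k\le V_0=0$ with $\ip{z^k-Rz^k}{P(z^k-z^\star)}\ge\tfrac12\norm{z^k-Rz^k}_P^2$, which holds by nonexpansiveness, and then apply Cauchy--Schwarz in the $P$-semi-inner product.

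All these manipulations are polynomial identities in the bilinear form $\ip{\cdot}{P\cdot}$, so they remain valid for a PSD $P$. I would state this remark explicitly, since $P$ is singular whenever $Q$ is.
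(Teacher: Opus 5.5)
Your proposal is correct. Its skeleton matches the paper's: Proposition~\ref{prop:psd} gives firm nonexpansiveness, hence $2T-I$ is nonexpansive, and the Halpern bound for $2T-I$ is then halved. Where you differ is in how the singular metric is handled.

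The paper passes to the quotient of the iterate space by $\ker P$. Nonexpansiveness shows that $T$ and $2T-I$ send $\ker P$-equivalent points to $\ker P$-equivalent points, so both maps descend to nonexpansive maps on a genuine finite-dimensional Hilbert space. Lieder's bound is cited there unchanged and lifted back, using that the quotient norm equals the $P$-seminorm.

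You instead stay in the original space and reprove the bound with your potential $V_k$. Writing $g_k=z^k-Rz^k$, the update yields the exact identity
\[
V_{k+1}-V_k=(k+1)(k+2)\Bigl[\tfrac12\norm{g_{k+1}-g_k}_P^2-\ip{g_{k+1}-g_k}{P(z^{k+1}-z^k)}\Bigr]\le 0 .
\]
The bracket is nonpositive precisely because $\norm{Rz^{k+1}-Rz^k}_P\le\norm{z^{k+1}-z^k}_P$. The final step, $V_k\le V_0=0$ combined with $\ip{g_k}{P(z^k-z^\star)}\ge\tfrac12\norm{g_k}_P^2$ and Cauchy--Schwarz for the semi-inner product, gives $\norm{g_k}_P\le 2\norm{z^0-z^\star}_P/(k+1)$. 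So your remark that positive semidefiniteness suffices is justified.

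The comparison cuts both ways. The paper's reduction is shorter and lets any Hilbert-space fixed-point result be imported without re-derivation. Yours is self-contained, avoids checking that the maps are well defined on cosets, and makes the constant explicit.

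There is one harmless sign slip in your fixed-point paragraph. The third block of $0\in\mathcal{F}(z)$ reads $0\in Ax+\partial h(y)$, i.e.\ $-Ax\in\partial h(y)$, not $Ax\in\partial h(y)$. With that sign it is equivalent to $-y\in N_S(Ax)$, as you intend.

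You also correctly claim only the $(x,y)$ correspondence. Fixed points of $T$ must in fact have $w=x$, so $\operatorname{Fix}(T)$ is generally smaller than $\mathcal{F}^{-1}(0)$ when $Q$ is singular.
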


\begin{proof}
Proposition~\ref{prop:psd} gives $P\succeq0$ and firm nonexpansiveness
of $T$ in $\norm{\cdot}_P$.
Hence $T$ and $2T-I$ are nonexpansive in $\norm{\cdot}_P$.
If $u-v\in\ker P$, nonexpansiveness implies $T(u)-T(v)\in\ker P$.
Thus both maps induce well-defined nonexpansive maps on the quotient of the iterate space
by $\ker P$. Equipped with the inner product induced by $P$, this quotient is a
finite-dimensional Hilbert space.
The standard Halpern bound for nonexpansive maps \citep{lieder2021halpern} applies on this
space to the induced reflected map, with anchor $z^0+\ker P$ and fixed point
$z^\star+\ker P$. Since the quotient norm equals the $P$-seminorm of any representative and
$\norm{(2T-I)(z^k)-z^k}_P=2\norm{T(z^k)-z^k}_P$, the same argument as in
\citep[Section~6, Corollary~1]{lu2024halpern} gives~\eqref{eq:rate}.
The fixed-point characterization established above for~\eqref{eq:iter}
gives the claimed correspondence with the KKT points.
\end{proof}

\paragraph{Restarting.}
We restart the reflected Halpern iteration with a new anchor whenever a restart condition is
satisfied \citep{lu2024halpern,lu2025cupdlpx}. Let $n$ index restart windows and $k$ index iterations
within a window. With anchor $z^{n,0}$, the update~\eqref{eq:halpern} becomes
\begin{equation}
\label{eq:restarted-halpern}
z^{n,k+1}=\frac{k+1}{k+2}\Big(2T(z^{n,k})-z^{n,k}\Big)
+\frac{1}{k+2}\,z^{n,0}.
\end{equation}
If a restart is triggered at inner iterate $z^{n,k_n}$, the next window starts from the
corresponding non-reflected PDHG point:
\begin{equation}
\label{eq:restart-anchor}
z^{n+1,0}:=T(z^{n,k_n}).
\end{equation}
This point serves as both the new anchor and the initial iterate, and the inner counter is
reset to zero.
Theorem~\ref{thm:rate} applies within each restart window, where the parameters and anchor remain fixed and $k$ counts iterations from the anchor.
The adaptive restart conditions and parameter updates are described in
Sections~\ref{sec:restart} and~\ref{sec:weights}, respectively.

Following \citep{lu2024halpern}, we consider the adaptive
restart rule
\begin{equation}
\label{eq:theoretical-restart}
\norm{z^{n,k}-T(z^{n,k})}_P
\le e^{-1}\norm{z^{n,0}-T(z^{n,0})}_P,
\end{equation}
where the first restart window has an arbitrary finite
length $k_0$, and each subsequent window restarts at the
first $k=k_n\ge1$ satisfying~\eqref{eq:theoretical-restart}.
A window terminates if its anchor has zero fixed-point
residual.

\begin{theorem}
\label{thm:linear-restart}
Assume $Q\succeq0$ and that~\eqref{eq:qp}
has a KKT point. Let $T$ be defined by~\eqref{eq:iter}
with fixed $\tau,\sigma,\beta>0$ satisfying
\(\norm{\tau\beta Q+\tau\sigma A^\top A}_2<1\).
Set $\mathcal Z^*:=\operatorname{Fix}(T)$ and
$\operatorname{dist}_P(z,\mathcal Z^*)
:=\inf_{z^\star\in\mathcal Z^*}\norm{z-z^\star}_P$.
Then there exists $\alpha>0$, depending on the problem
data, parameters, and initial point, such that the
restarted iteration~\eqref{eq:restarted-halpern}--
\eqref{eq:restart-anchor} with
\eqref{eq:theoretical-restart} satisfies
\begin{equation}
\label{eq:linear-restart}
\begin{aligned}
k_n\le\left\lceil\frac{e}{\alpha}\right\rceil, \ 
\operatorname{dist}_P(z^{n,0},\mathcal Z^*)\le
\frac{e^{-(n-1)}}{\alpha(k_0+1)}
\operatorname{dist}_P(z^{0,0},\mathcal Z^*),
\qquad n\ge1,
\end{aligned}
\end{equation}
for every nonterminal restart window. Thus the restart anchors converge linearly, with
$O(\alpha^{-1}\log(1/\eps))$ iteration complexity.
\end{theorem}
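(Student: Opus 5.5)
The argument combines the sublinear Halpern bound of Theorem~\ref{thm:rate}, applied inside each window, with a local metric-subregularity (quadratic-growth type) error bound for the piecewise-linear operator $\mathcal F$. This is the standard restarted-Halpern scheme of \citep{lu2024halpern}.

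\textbf{Step 1: an error bound.} The key ingredient is an error bound of the form
\[
\alpha\,\operatorname{dist}_P(z,\mathcal Z^*)\le \norm{z-T(z)}_P
\]
for all $z$ in a bounded set containing every iterate. Three facts feed into it.
\begin{itemize}
\item Since $X$ and $S$ are boxes, $\mathcal F$ in~\eqref{eq:augmented-operator} is a polyhedral multifunction: its graph is a finite union of polyhedra. Robinson's theorem then gives local upper Lipschitz continuity of $\mathcal F^{-1}$ at $0$, i.e.\ metric subregularity at every point of $\mathcal Z^*$ with a uniform constant on bounded sets.
\item By~\eqref{eq:ppm}, $P(z-T(z))\in\mathcal F(T(z))$. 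Hence $\operatorname{dist}(T(z),\mathcal Z^*)\lesssim\norm{P(z-T(z))}_2\le\norm{P}^{1/2}\norm{z-T(z)}_P$.
\item The triangle inequality passes from $T(z)$ back to $z$: $\operatorname{dist}_P(z,\mathcal Z^*)\le\norm{z-T(z)}_P+\operatorname{dist}_P(T(z),\mathcal Z^*)$.
\end{itemize}
The strict inequality $\norm{\tau\beta Q+\tau\sigma A^\top A}_2<1$ is what makes $P$ comparable to a simpler matrix. The Schur complement computation of Proposition~\ref{prop:psd} then gives $\frac1\tau I-\beta Q-\sigma A^\top A\succ0$, so $P$ is positive definite on the $(x,y)$ blocks and equivalent to $\mathrm{diag}(I,Q,I)$. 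This lets us convert between Euclidean distances and $\norm{\cdot}_P$, with the $w$ component measured only modulo $\ker Q$, which is exactly the kernel of $P$. Boundedness is available because the $P$-distance of all Halpern iterates and anchors to any fixed point is nonincreasing, by nonexpansiveness and convexity of the Halpern combination. All iterates therefore lie in a $P$-ball around $z^{0,0}$ of radius $\operatorname{dist}_P(z^{0,0},\mathcal Z^*)$, after quotienting by $\ker P$; this is why $\alpha$ depends on the initial point.

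\textbf{Step 2: bounding $k_n$.} In window $n$, Theorem~\ref{thm:rate} with the closest fixed point gives
\[
\norm{z^{n,k}-T(z^{n,k})}_P\le \frac{\operatorname{dist}_P(z^{n,0},\mathcal Z^*)}{k+1}\le\frac{\norm{z^{n,0}-T(z^{n,0})}_P}{\alpha(k+1)}.
\]
Criterion~\eqref{eq:theoretical-restart} therefore holds once $k+1\ge e/\alpha$, which yields $k_n\le\lceil e/\alpha\rceil$.

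\textbf{Step 3: contraction between windows.} For $n\ge2$, firm nonexpansiveness gives $\norm{T(z)-T^2(z)}_P\le\norm{z-T(z)}_P$, so the new anchor satisfies
\[
\norm{z^{n+1,0}-T(z^{n+1,0})}_P\le\norm{z^{n,k_n}-T(z^{n,k_n})}_P\le e^{-1}\norm{z^{n,0}-T(z^{n,0})}_P.
\]
For the first window, Theorem~\ref{thm:rate} gives $\norm{z^{1,0}-T(z^{1,0})}_P\le \operatorname{dist}_P(z^{0,0},\mathcal Z^*)/(k_0+1)$. Iterating the contraction, $\norm{z^{n,0}-T(z^{n,0})}_P\le e^{-(n-1)}\operatorname{dist}_P(z^{0,0},\mathcal Z^*)/(k_0+1)$. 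The error bound of Step 1 converts this residual bound into~\eqref{eq:linear-restart}. The complexity claim follows by multiplying the number of windows, $O(\log(1/\eps))$, by the window length, $\lceil e/\alpha\rceil$.

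\textbf{Main obstacle.} The delicate step is Step 1. Metric subregularity is stated in Euclidean distance, while the result needs $\operatorname{dist}_P$. With singular $Q$, the set $\mathcal Z^*$ includes all $w$ with $Qw=Qx^\star$, but $T$ acts on the actual $w$. I would handle this by working on the quotient by $\ker P$, as in Theorem~\ref{thm:rate}, checking that $\mathcal F$ depends on $w$ only through $Qw$ so that subregularity descends to the quotient, and using strict step-size positivity to obtain norm equivalence there.
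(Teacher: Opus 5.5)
Your proposal is correct and follows essentially the same route as the paper. The error bound comes from Robinson's polyhedral result, made uniform on the bounded set of iterates projected off $\ker P=\{(0,v,0):Qv=0\}$, where the strict step-size condition makes $P$ positive definite. Combined with $P(z-T(z))\in\mathcal F(T(z))$, $P^2\preceq\lambda_{\max}(P)P$ and the triangle inequality, it gives $\alpha\operatorname{dist}_P(z,\mathcal Z^*)\le\norm{z-T(z)}_P$. Theorem~\ref{thm:rate} then bounds $k_n$, and nonexpansiveness of $T$ gives the $e^{-1}$ contraction of the anchor residuals.

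One small inaccuracy: $\operatorname{Fix}(T)$ contains only points with $w=x$; it is $\mathcal F^{-1}(0)$ that contains every $w$ with $Qw=Qx$. Since $\mathcal F^{-1}(0)=\mathcal Z^*+\ker P$, the $P$-distances to the two sets coincide, which is exactly the paper's observation $\Pi\mathcal F^{-1}(0)=\Pi\mathcal Z^*$.
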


\begin{rem}
{We would like to comment that convergence in the $P$-seminorm guarantees convergence of the primal-dual variables to the
optimal solution set, even when $Q$ is singular. To see this, under the strict step-size condition in
Theorem~\ref{thm:linear-restart}, we have
\(
\ker P=\{(0,v,0):Qv=0\}.
\)
Let $\mathcal Z^*_{xy}$ denote the set of primal--dual
optimal pairs of~\eqref{eq:qp}, and let $\lambda_+>0$
be the smallest positive eigenvalue of $P$.
For any $z=(x,w,y)$,
\(
\operatorname{dist}_2((x,y),\mathcal Z^*_{xy})
\le \frac{1}{\sqrt{\lambda_+}}
\operatorname{dist}_P(z,\mathcal Z^*),
\)
and consequently,
$\operatorname{dist}_P(z,\mathcal Z^*)=0$
implies $(x,y)\in\mathcal Z^*_{xy}$.}
\end{rem}

\begin{proof}
Fix $z^\star\in\mathcal Z^*$ and let
$R:=\norm{z^{0,0}-z^\star}_P$.
By Proposition~\ref{prop:psd}, $T$ and $2T-I$ are
nonexpansive in $\norm{\cdot}_P$. Hence, for all $n,k$,
\[
\norm{z^{n,k}-z^\star}_P\le R,\qquad
\norm{T(z^{n,k})-z^\star}_P\le R \ .
\]

Let $\mathcal N:=\ker P$ and
$\Pi:=\proj_{\mathcal N^\perp}$.
Completing the square in~\eqref{eq:P} gives
\[
\norm{z}_P^2
=\norm{x}_{M}^2
+\beta^{-1}\norm{w-\beta x}_Q^2
+\sigma^{-1}\norm{y+\sigma Ax}_2^2,
\]
where
$M:=\tau^{-1}I-\beta Q-\sigma A^\top A\succ0$.
Consequently,
\(
\mathcal N=\{(0,v,0):Qv=0\},
\ P|_{\mathcal N^\perp}\succ0.
\)
Let $\lambda_+>0$ be the smallest positive eigenvalue
of $P$. Then
\[
\norm{\Pi T(z^{n,k})-\Pi z^\star}_2
\le\frac{R}{\sqrt{\lambda_+}},
\]
so all projected PDHG points belong to the compact set
\[
\mathcal B:=
\left\{u\in\mathcal N^\perp:
\norm{u-\Pi z^\star}_2
\le\frac{R}{\sqrt{\lambda_+}}\right\}.
\]

Let $\mathcal Z_{\mathcal F}:=\mathcal F^{-1}(0)$.
Since $\mathcal F$ is a polyhedral multifunction,
Robinson's error bound \citep{robinson1981polyhedral}
gives $\kappa_0,\rho>0$ such that
\[
\operatorname{dist}_2(u,\mathcal Z_{\mathcal F})
\le\kappa_0\operatorname{dist}_2(0,\mathcal F(u))
\]
whenever $\operatorname{dist}_2(0,\mathcal F(u))<\rho$.
Moreover, $\Pi z^\star\in\mathcal Z_{\mathcal F}$, so
\(
\operatorname{dist}_2(u,\mathcal Z_{\mathcal F})
\le\frac{R}{\sqrt{\lambda_+}},
\ u\in\mathcal B.
\)
Thus, setting
\(
\kappa:=
\max\left\{\kappa_0,
\frac{D}{\rho\sqrt{\lambda_+}}\right\},
\)
we obtain the uniform error bound
\begin{equation}
\label{eq:qp-error-bound}
\operatorname{dist}_2(u,\mathcal Z_{\mathcal F})
\le\kappa\operatorname{dist}_2(0,\mathcal F(u)),
\quad
u\in\mathcal B\cap\operatorname{dom}\mathcal F.
\end{equation}

Notice that
$\mathcal F(z)=\mathcal F(\Pi z)$ and
$\Pi\mathcal Z_{\mathcal F}=\Pi\mathcal Z^*$.
Using~\eqref{eq:ppm} and~\eqref{eq:qp-error-bound},
for every generated iterate $z$ we have
\[
\begin{aligned}
\operatorname{dist}_P(z,\mathcal Z^*)
&\le \norm{z-T(z)}_P+\operatorname{dist}_P(T(z),\mathcal Z^*)\\
&\le \norm{z-T(z)}_P+\sqrt{L}\,
  \operatorname{dist}_2(\Pi T(z),\mathcal Z_{\mathcal F})\\
&\le \norm{z-T(z)}_P+\kappa\sqrt{L}\,
  \norm{P(z-T(z))}_2\\
&\le (1+\kappa L)\norm{z-T(z)}_P,
\end{aligned}
\]
where $L:=\lambda_{\max}(P)$ and the last inequality
follows from $P^2\preceq LP$.
Set $\alpha:=(1+\kappa L)^{-1}>0$. Then
\begin{equation}
\label{eq:qp-sharpness}
\alpha \operatorname{dist}_P(z,\mathcal Z^*)\le \norm{z-T(z)}_P.
\end{equation}

By Theorem~\ref{thm:rate} and
\eqref{eq:qp-sharpness},
\[
\norm{z^{n,k}-T(z^{n,k})}_P
\le\frac{\operatorname{dist}_P(z^{n,0},\mathcal Z^*)}{k+1}
\le\frac{\norm{z^{n,0}-T(z^{n,0})}_P}{\alpha(k+1)}.
\]
Hence the restart condition is satisfied within
$k_n\le\lceil e/\alpha\rceil$ iterations.

Finally, nonexpansiveness of $T$ yields
$\norm{T(z)-T(T(z))}_P\le \norm{z-T(z)}_P$. Thus
\[
\begin{aligned}
&\norm{z^{n+1,0}-T(z^{n+1,0})}_P=\norm{T(z^{n,k_n})-T(T(z^{n,k_n}))}_P \le \norm{z^{n,k_n}-T(z^{n,k_n})}_P \le e^{-1}\norm{z^{n,0}-T(z^{n,0})}_P,
\end{aligned}
\]
and $\norm{z^{1,0}-T(z^{1,0})}_P
\le \norm{z^{0,k_0}-T(z^{0,k_0})}_P
\le \frac{\operatorname{dist}_P(z^{0,0},\mathcal Z^*)}{k_0+1}$.
Applying~\eqref{eq:qp-sharpness} recursively gives
\[
\begin{aligned}
\operatorname{dist}_P(z^{n,0},\mathcal Z^*)\le\alpha^{-1}\norm{z^{n,0}-T(z^{n,0})}_P\le\frac{e^{-(n-1)}}{\alpha(k_0+1)}
\operatorname{dist}_P(z^{0,0},\mathcal Z^*),
\end{aligned}
\]
which proves~\eqref{eq:linear-restart}.
\end{proof}

\section{Algorithmic Enhancements}
\label{sec:enhancements}

cuPDQPx combines the method introduced in Section~\ref{sec:framework} with preconditioning,
step-size selection, adaptive restart, and weight updates. We focus on the auxiliary-variable
scheme in this section; the corresponding enhancements for diagonal $Q$ are presented in
Section~\ref{app:diagonal}. We describe these enhancements, then present the termination
criteria and summarize the complete algorithm.
Default parameter values are collected in Section~\ref{sec:impl}.

\subsection{Preconditioning}
\label{sec:precond}

Following PDLP \citep{applegate2021pdlp,lu2025cupdlpjl} and cuPDLPx \citep{lu2025cupdlpx}, cuPDQPx applies diagonal preconditioning before the PDHG iterations to improve numerical conditioning. 
With positive diagonal row and column scaling matrices $D_1\in\R^{m\times m}$ and $D_2\in\R^{n\times n}$, the scaled problem data are given by
\[
\begin{aligned}
\tilde A &= D_1AD_2, & \tilde Q &= D_2QD_2, & \tilde c &= D_2c,\\
\tilde\ell_c &= D_1\ell_c, & \tilde u_c &= D_1u_c,\\
\tilde\ell_v &= D_2^{-1}\ell_v, & \tilde u_v &= D_2^{-1}u_v.
\end{aligned}
\]

Each scaling pass computes diagonal factors $\widehat D_1,\widehat D_2$ from the current
matrices $A,Q$. We initialize $D_1,D_2$ to the identity and update them as
\[
D_1\leftarrow\widehat D_1D_1,\qquad D_2\leftarrow D_2\widehat D_2.
\]

\paragraph[L-infinity Ruiz equilibration.]{$L_\infty$ Ruiz equilibration.}
Each iteration of Ruiz equilibration uses
\[
(\widehat D_1)_{ii}=\norm{A_{i,:}}_\infty^{-1/2},\qquad
(\widehat D_2)_{jj}=\max\!\left\{\norm{A_{:,j}}_\infty,
\norm{Q_{:,j}}_\infty\right\}^{-1/2}.
\]

\paragraph[L2 rescaling.]{$L_2$ rescaling.}
cuPDQPx next applies $L_2$ scaling with
\[
(\widehat D_1)_{ii}=\left(\sum_j|A_{ij}|^2\right)^{-1/4},\qquad
(\widehat D_2)_{jj}=\left(\sum_i|A_{ij}|^2+\sum_i|Q_{ij}|^2\right)^{-1/4}.
\]

\paragraph{Pock--Chambolle scaling.}
cuPDQPx finally applies Pock--Chambolle scaling \citep{pock2011diagonal} with exponent $\alpha$:
\[
(\widehat D_1)_{ii}=\left(\sum_j|A_{ij}|^\alpha\right)^{-1/2},\qquad
(\widehat D_2)_{jj}=\left(\sum_i|A_{ij}|^{2-\alpha}\right)^{-1/2}.
\]
The column scaling uses $A$ only.

\subsection{Step size}
\label{sec:stepselection}

Following PDLP \citep{applegate2021pdlp} and cuPDLPx \citep{lu2025cupdlpx}, cuPDQPx uses
a primal weight to balance the primal and dual updates. We parameterize the step sizes as
\begin{equation}
\label{eq:steps}
\tau=\frac{\eta}{\omega},\qquad \sigma=\frac{\eta\omega}{\delta},\qquad \beta=\eta\,\delta\,\omega ,
\end{equation}
where $\eta>0$ is the step size, $\omega>0$ is the primal weight, and $\delta>0$
balances the auxiliary dual block $w$ against the constraint dual block $y$.

Under this parameterization, the step-size condition~\eqref{eq:stepcond} becomes
\[
\eta^2\norm{\delta Q+\delta^{-1}A^\top A}_2\le1.
\]
This condition is independent of $\omega$. We call $\delta$ the quadratic weight because
it balances the contributions of $Q$ and $A^\top A$ in this condition.
We choose the step size as
\begin{equation}
\label{eq:eta}
\eta=\frac{\gamma_\eta}{\sqrt{\big\|\delta Q+\delta^{-1}A^\top A\big\|_2}},
\end{equation}
where $\gamma_\eta$ is a fixed step-size factor (we set it to be $0.99$ in the solver package) and the operator norm in the denominator of \eqref{eq:eta} is estimated by power
iteration.

The weights are initialized and updated at restarts as described in Section~\ref{sec:weights}.
The step size is recomputed whenever $\delta$ is updated.

\subsection{Adaptive restart}
\label{sec:restart}

Following the adaptive restart strategy of cuPDLPx \citep{lu2025cupdlpx,lu2024halpern},
we use the fixed-point error to decide
when to restart the reflected Halpern iteration~\eqref{eq:halpern}:
\begin{equation}
\label{eq:fperr}
r_{\mathrm{fp}}(z)=\norm{z-T(z)}_{P},
\end{equation}
where $P$ is the norm matrix in~\eqref{eq:P}.
The parameters and hence $P$ remain fixed within each restart window.

The squared fixed-point error under the parameterization~\eqref{eq:steps} is
\[
r_{\mathrm{fp}}(z)^2=\frac{\omega}{\eta}\norm{\Delta x}_2^2
+\frac{1}{\eta\delta\omega}\ip{\Delta w}{Q\Delta w}
+\frac{\delta}{\eta\omega}\norm{\Delta y}_2^2
-2\ip{Q\Delta w}{\Delta x}+2\ip{A^\top\Delta y}{\Delta x},
\]
where $(\Delta x,\Delta w,\Delta y)=z-T(z)$.

Let $z^{n,k}$ denote the $k$th iterate in restart window $n$, with anchor $z^{n,0}$,
and let $K$ be the total iteration count. We check the restart conditions every $H$
iterations, so the previous check is at $z^{n,k-H}$. A restart is triggered when any of
the following conditions holds:
\begin{align}
r_{\mathrm{fp}}(z^{n,k})&\le\beta_1 r_{\mathrm{fp}}(z^{n,0}),
\tag{sufficient decay}\label{eq:restart}\\[3pt]
r_{\mathrm{fp}}(z^{n,k})&\le\beta_2 r_{\mathrm{fp}}(z^{n,0})
\quad\text{and}\quad r_{\mathrm{fp}}(z^{n,k})>r_{\mathrm{fp}}(z^{n,k-H}),
\tag{necessary decay + no local progress}\label{eq:restart-necessary}\\[3pt]
k&\ge\beta_3 K
\tag{artificial restart}\label{eq:restart-artificial}
\end{align}
At a restart, we set both the anchor and the initial iterate of the next window to the
non-reflected PDHG point $T(z^{n,k})$ and reset the Halpern counter.
We then update the weights $\omega,\delta$ and the step size as described in
Sections~\ref{sec:weights} and~\ref{sec:stepselection}.

\subsection{Primal and quadratic weights}
\label{sec:weights}

Following the primal-weight strategy of PDLP \citep{applegate2021pdlp}, we adapt the weights to
balance the relative scales of the variable updates. Under the step-size parameterization~\eqref{eq:steps},
the primal weight $\omega$ balances the primal block $x$ against the dual blocks $(w,y)$,
while the quadratic weight $\delta$ balances the auxiliary quadratic block $w$ against the
constraint dual block $y$.

We use the movements over each restart window to determine target weights, accounting also for the dependence of the step size $\eta$ on $\delta$.
Both weights remain fixed within each window and are updated at restarts.

\paragraph{Initialization.}
For $A\ne0$ and $Q\ne0$, we initialize the quadratic weight by solving
\begin{equation}
\label{eq:delta0}
\delta_0\in\argmin_{\delta>0}\norm{\delta Q+\delta^{-1}A^\top A}_2.
\end{equation}
We approximate the minimizer using golden-section search, with
$\norm{A}_2/\sqrt{\norm{Q}_2}$ used to initialize the search.
The operator norms are estimated by power iteration.
If $A=0$ or $Q=0$, we set $\delta_0=1$.

The initial primal weight is
\begin{equation}
\label{eq:omega0}
\omega_0=\frac{\norm{c}_2}{\norm{\max\{|\ell_c|,|u_c|\}}_2}.
\end{equation}
The maximum is componentwise, with infinite bound entries treated as zero.
All norms are evaluated on the rescaled problem data.
We replace the ratio in~\eqref{eq:omega0} by $1$ if it exceeds $10^3$, before clamping
$\omega_0$ to $[10^{-3},10^3]$.

\paragraph{Target weights.}
Let $\Delta x$, $\Delta w$, and $\Delta y$ denote the movements of the non-reflected PDHG iterates
over the completed restart window, with $\norm{\Delta w}_Q^2=\ip{\Delta w}{Q\Delta w}$.
The metric in~\eqref{eq:P} motivates the separable surrogate
\begin{equation}
\label{eq:sepmetric}
\Phi(\omega,\delta)\ =\ \omega\norm{\Delta x}^2+\frac{1}{\delta\omega}\norm{\Delta w}_Q^2+\frac{\delta}{\omega}\norm{\Delta y}^2 .
\end{equation}
Minimizing $\Phi$ over $\omega$ with $\delta$ fixed gives
\begin{equation}
\label{eq:omegastar}
\omega^\star=\frac{\sqrt{\delta^{-1}\norm{\Delta w}_Q^2+\delta\norm{\Delta y}^2}}{\norm{\Delta x}}.
\end{equation}
For the quadratic weight, we also account for the dependence of the step size on $\delta$.
Using the separable norm bound, we minimize $\Phi(\omega,\delta)/\eta(\delta)$ with
$\eta(\delta)\propto(\delta\norm{Q}+\delta^{-1}\norm{A}^2)^{-1/2}$. Eliminating $\omega$ gives
\begin{equation}
\label{eq:deltastar}
\delta^\star=\sqrt{\frac{\norm{A}_2}{\sqrt{\norm{Q}_2}}\cdot\frac{\norm{\Delta w}_Q}{\norm{\Delta y}}}.
\end{equation}
Thus the quadratic-weight target combines the norm ratio $\norm{A}_2/\sqrt{\norm{Q}_2}$
with the observed movement ratio.

\paragraph{Adaptive update rule.}
At restart $n$, we first damp the target~\eqref{eq:deltastar} on a logarithmic scale with
factor $\gamma_\delta$, then limit the multiplicative change by $\kappa_\delta$:
\begin{equation}
\label{eq:deltaupdate}
\delta_{n+1}=\proj_{[\delta_n/\kappa_\delta,\,\kappa_\delta\delta_n]}
\bigl(\delta_n^{1-\gamma_\delta}(\delta_n^\star)^{\gamma_\delta}\bigr).
\end{equation}
We leave $\delta$ unchanged when $\norm{\Delta y}$ or $\norm{\Delta w}_Q$ is negligible relative
to $\norm{\Delta x}$. We then evaluate the primal-weight target~\eqref{eq:omegastar} at the new
$\delta$ and apply the PID controller of cuPDLPx \citep{lu2025cupdlpx}.
With the integral and derivative gains set to zero ($K_I=K_D=0$), the update becomes
\begin{equation}
\label{eq:omegaupdate}
e_n=\log\frac{\omega^\star_n}{\omega_n},\qquad
\log\omega_{n+1}=\log\omega_n+K_P e_n,
\end{equation}
where $K_P$ is the proportional gain.

When $\delta$ changes, we recompute $\eta$ from~\eqref{eq:eta} using one power method and update
$\beta=\eta\delta\omega$ in~\eqref{eq:iter-w}.

\subsection{Termination criteria}
\label{sec:termination}

cuPDQPx terminates with status \textsc{optimal} when the relative primal residual,
relative dual residual, and relative primal--dual gap are all below the tolerance $\eps$.
These measures are defined with respect to the primal problem~\eqref{eq:qp} and its
Wolfe-type dual~\eqref{eq:dual}. At a primal--dual iterate $(x,y)$ with bound multiplier $r$,
they are given by
\begin{gather}
\frac{\norm{Ax-\proj_S(Ax)}_\infty}
{1+\max\{\norm{(\ell_c,u_c)}_\infty,\norm{Ax}_\infty\}}
\tag{primal feasibility}\label{eq:term-primal}\\[3pt]
\frac{\norm{Qx+c-A^\top y-r}_\infty}
{1+\max\{\norm{c}_\infty,\norm{A^\top y}_\infty,\norm{Qx}_\infty\}}
\tag{dual feasibility}\label{eq:term-dual}\\[3pt]
\frac{\big|x^\top Qx+c^\top x+p(-y;\ell_c,u_c)+p(-r;\ell_v,u_v)\big|}
{1+\max\big\{\big|\tfrac12x^\top Qx+c^\top x\big|,
\big|\tfrac12x^\top Qx+p(-y;\ell_c,u_c)+p(-r;\ell_v,u_v)\big|\big\}}
\tag{primal--dual gap}\label{eq:term-gap}
\end{gather}
where infinite bound entries are treated as zero when computing $\norm{(\ell_c,u_c)}_\infty$.
We evaluate all three measures at the non-reflected PDHG point using the original, unscaled
problem data.

In the notation of~\eqref{eq:iter}, we recover the bound multiplier via the Moreau identity
applied to the primal update~\eqref{eq:iter-x}:
\[
r^{k+1}=\tau^{-1}(x^{k+1}-x^k)+c+Qw^k-A^\top y^k.
\]
The multiplier is zero wherever the projection is inactive.

Termination is checked periodically, with additional checks during the initial iterations.
These additional checks leave the restart schedule in Section~\ref{sec:restart} unchanged.
If the time limit is reached, cuPDQPx returns the latest non-reflected PDHG point with time-limit
status. The check frequencies and experimental time limit are specified in
Sections~\ref{sec:impl} and~\ref{sec:setup}, respectively.

\subsection{Algorithm summary}

Algorithm~\ref{sec:procedure} summarizes cuPDQPx. The indices $n$ and $k$ denote restart windows
and iterations within a window. The point $\tilde{z}^{n,k+1}=T(z^{n,k})$ is the non-reflected PDHG
update, and $z^{n,k+1}$ is the Halpern iterate. The map $T$ uses the current weights and step size.
Returned points are mapped to the original variables. Termination and restart tests are performed
only at the scheduled checks described in Sections~\ref{sec:termination} and~\ref{sec:restart}.

\begingroup
\setlength{\parfillskip}{0pt plus 1fil}
\begin{algorithm}[H]
\caption{Overview of the core algorithm implemented in cuPDQPx}
\label{sec:procedure}
\DontPrintSemicolon
\small
\KwIn{QP~\eqref{eq:qp}, initial point $z^{0,0}$, tolerance $\eps$.}
Apply diagonal preconditioning. \tcp*{Section~\ref{sec:precond}}
Initialize the step size $\eta$ and weights $\omega,\delta$.
    \tcp*{Sections~\ref{sec:stepselection} and~\ref{sec:weights}}
\For{$n=0,1,\ldots$}{
    $k\gets0$.\;
    \Repeat(\tcp*[f]{Adaptive restart; Section~\ref{sec:restart}}){a restart condition holds}{
        $\tilde{z}^{n,k+1}\gets T(z^{n,k})$.
            \tcp*{PDHG update; Section~\ref{sec:scheme}}
        \If(\tcp*[f]{Termination check; Section~\ref{sec:termination}}){the termination criteria
        hold at $\tilde{z}^{n,k+1}$}{
            \Return{$(\tilde{x}^{n,k+1},\tilde{y}^{n,k+1})$.}\;
        }
        $z^{n,k+1}\gets\frac{k+1}{k+2}(2T(z^{n,k})-z^{n,k})
            +\frac{1}{k+2}z^{n,0}$.
            \tcp*{Halpern and reflection; Section~\ref{sec:halpern}}
        $k\gets k+1$.\;
    }
    $z^{n+1,0}\gets\tilde{z}^{n,k}$. \tcp*{Reset the restart anchor}
    Update $\delta,\omega$ and the step size $\eta$. \tcp*{Section~\ref{sec:weights}}
}
\end{algorithm}
\endgroup

\section{Diagonal quadratic objectives}
\label{app:diagonal}

The auxiliary-variable scheme in Sections~\ref{sec:framework}--\ref{sec:enhancements}
yields explicit updates for nondiagonal $Q$ without matrix factorizations or inner quadratic
solves. When $Q$ is diagonal, the primal proximal subproblem~\eqref{eq:proxsub} separates
across coordinates and admits a closed-form solution. This structure arises in practical
QPs and is common in our benchmark sets (Table~\ref{tab:sets}).
We therefore apply PDHG directly to~\eqref{eq:saddle}, together with the reflected Halpern
framework and enhancements of cuPDLPx \citep{lu2025cupdlpx}.
This specialization requires no auxiliary variable or quadratic-weight adaptation, handles
$Q$ through elementwise operations, and has a step-size condition depending only on $A$.
These simplifications can make it more efficient than the auxiliary-variable scheme on
diagonal QPs.

The resulting two-block structure also simplifies the practical enhancements. We reuse the
preconditioning, adaptive restart, and primal-weight update strategies developed for LP in
PDLP \citep{applegate2021pdlp} and cuPDLPx \citep{lu2025cupdlpx}, with the adjustments
described below. The PDHCG and HPR-QP-C implementations also provide specialized handling
of diagonal QPs (Table~\ref{tab:baseline-implementation}), although this specialization is
not explicitly described in their papers \citep{chen2025hprqp,huang2025pdhcg}.

\subsection{Primal--dual updates and convergence}
\label{sec:diagonal-updates}

For diagonal $Q$, we write $Q=\mathrm{Diag}(q)$, where $q_i=Q_{ii}$ is the $i$th diagonal entry.
Applying PDHG~\eqref{eq:pdhg-prox} gives the explicit updates
\begin{subequations}
\label{eq:diag-iter}
\begin{align}
x^{k+1} &= \proj_X\!\left((I+\tau Q)^{-1}\big(x^k+\tau(A^\top y^k-c)\big)\right),
\label{eq:diagprox}\\
y^{k+1} &= y^k-\sigma\Big(A(2x^{k+1}-x^k)
-\proj_{S}\big(A(2x^{k+1}-x^k)-\sigma^{-1}y^k\big)\Big). \label{eq:diag-dual}
\end{align}
\end{subequations}
Here $z^k=(x^k,y^k)$ and $z^{k+1}=T(z^k)$ denotes one PDHG step defined
by~\eqref{eq:diag-iter}.
The optimality conditions of~\eqref{eq:diag-iter} give a proximal point representation
for the saddle-point problem~\eqref{eq:saddle}, with
\begin{equation}
\label{eq:P-diag}
P_{\mathrm{diag}}(\tau,\sigma)=
\begin{pmatrix}
\frac{1}{\tau}I & A^\top\\[2pt]
A & \frac{1}{\sigma}I
\end{pmatrix}.
\end{equation}
For $\tau,\sigma>0$, this matrix is positive semidefinite if and only if
\begin{equation}
\label{eq:diag-stepcond}
\tau\sigma\norm{A}_2^2\le1.
\end{equation}
Under this condition, the monotonicity argument in Proposition~\ref{prop:psd}, applied
to the optimality operator of~\eqref{eq:saddle}, shows that $T$ is firmly nonexpansive
in $\norm{\cdot}_{P_{\mathrm{diag}}}$. Its fixed points are exactly the saddle points
of~\eqref{eq:saddle}. Step-size selection is described in Section~\ref{sec:diagonal-enhancements}.

\paragraph{Reflected Halpern scheme.}
Assume $Q\succeq0$ and that~\eqref{eq:qp} has a KKT point. Let $T$ be defined
by~\eqref{eq:diag-iter} with fixed $\tau,\sigma>0$ satisfying~\eqref{eq:diag-stepcond},
and set $P=P_{\mathrm{diag}}(\tau,\sigma)$ as in~\eqref{eq:P-diag}.
Then $T$ and $2T-I$ are nonexpansive in $\norm{\cdot}_P$, and the fixed points of $T$
coincide with the KKT points of~\eqref{eq:qp} in the $(x,y)$ components.
The iterates generated by~\eqref{eq:halpern} with a fixed anchor satisfy the
bound~\eqref{eq:rate} for every fixed point $z^\star$ of $T$.
Indeed, the proximal point argument above gives firm nonexpansiveness of $T$ in
$\norm{\cdot}_P$, and the quotient-space argument in the proof of
Theorem~\ref{thm:rate} applies with $P=P_{\mathrm{diag}}$.
All theoretical guarantees of cuPDLPx~\citep{lu2024halpern,lu2025cupdlpx} carry over in this case.

\subsection{Algorithmic enhancements}
\label{sec:diagonal-enhancements}

\paragraph{Step size.}
For diagonal $Q$, the quadratic term is included in the closed-form primal
update~\eqref{eq:diagprox}, and condition~\eqref{eq:diag-stepcond} depends only on $A$.
We set $\delta\equiv1$ and choose
\begin{equation}
\eta=\frac{\gamma_\eta}{\norm{A}_2},
\end{equation}
where $\norm{A}_2$ is also estimated by power iteration.

\paragraph{Adaptive restart.}
We use the restart conditions in Section~\ref{sec:restart}, with $P=P_{\mathrm{diag}}$
in~\eqref{eq:fperr}. Under the parameterization~\eqref{eq:steps} with $\delta\equiv1$,
we have
\[
r_{\mathrm{fp}}(z)^2=\frac{\omega}{\eta}\norm{\Delta x}_2^2
+\frac{1}{\eta\omega}\norm{\Delta y}_2^2
+2\ip{A^\top\Delta y}{\Delta x},
\]
where $(\Delta x,\Delta y)=z-T(z)$.

\paragraph{Primal weight.}
We fix $\delta\equiv1$ and update only the primal weight, initialized as
\begin{equation}
\label{eq:omega0-diag}
\omega_0=\frac{\norm{c}_2}{\norm{\max\{|\ell_c|,|u_c|\}}_2}
+\frac{\norm{Q}_2}{\norm{A}_2}.
\end{equation}
We use the norm and bound conventions and the initial primal-weight safeguard described in
Section~\ref{sec:weights}.

Let $\Delta x$ and $\Delta y$ denote the movements of the non-reflected PDHG iterates over
one restart window, as in Section~\ref{sec:weights}. Since the auxiliary block is absent,
the separable surrogate reduces to $\omega\norm{\Delta x}^2+\omega^{-1}\norm{\Delta y}^2$.
Minimizing it with respect to $\omega$ gives the primal-weight target
\begin{equation}
\label{eq:omegadiag}
\omega^\star=\frac{\norm{\Delta y}}{\norm{\Delta x}} .
\end{equation}

We use~\eqref{eq:omegaupdate} and skip windows without primal or dual
movement.

We suspend primal-weight updates when repeated moves in the same
direction do not sufficiently reduce the residuals. Let $\rho_n$ be the larger of the relative
primal and dual residuals in
Section~\ref{sec:termination} at restart $n$, and let $d_n$ be the proposed logarithmic change
$\log(\omega_{n+1}/\omega_n)$. If
\begin{equation}
\label{eq:brake}
d_n d_{n-1}>0 \quad\text{and}\quad \rho_n>\theta_b\,\rho_{n-1}
\end{equation}
holds at $N_b$ consecutive restarts, the move is set to zero until the condition clears.
Here $\theta_b$ is the residual reduction threshold.

Algorithm~\ref{sec:procedure} applies with $z=(x,y)$ and the map $T$ in~\eqref{eq:diag-iter},
using the step size, fixed-point error, and primal-weight updates specified above.

\subsection{Termination and implementation}
\label{sec:diagonal-implementation}

\paragraph{Termination criteria.}
We use the termination criteria in Section~\ref{sec:termination}. In the notation
of~\eqref{eq:diag-iter}, the bound multiplier is recovered via the Moreau identity:
\[
r^{k+1}=\tau^{-1}(x^{k+1}-x^k)+c+Qx^{k+1}-A^\top y^k.
\]
The multiplier is zero wherever the projection is inactive.

\paragraph{GPU implementation.}
cuPDQPx automatically selects the specialized scheme~\eqref{eq:diag-iter} when it detects
diagonal $Q$. We store the diagonal as a vector and evaluate the quadratic terms using
elementwise vector operations instead of SpMVs. Combined with the closed-form primal
update~\eqref{eq:diagprox}, the only remaining SpMVs involve $A$ and $A^\top$.
Thus, each iteration between termination checks requires two SpMVs and no global reduction.

\paragraph{Default parameters.}
We use $\alpha=0.5$ in Pock--Chambolle scaling and $K_P=0.5$ in the primal-weight update.
The weight-update suspension rule uses $(\theta_b,N_b)=(0.9,3)$.
The remaining parameters and check frequencies are as specified in Section~\ref{sec:impl}.

\section{Numerical Experiments}
\label{sec:experiments}

This section evaluates the numerical performance of cuPDQPx on the Maros--M\'esz\'aros
collection, a convex subset of QPLIB, and synthetic QPs.
We first describe the GPU implementation and default parameters, then present the
experimental setup and comparisons of solved counts and running times.
Additional comparisons by quadratic structure and iteration count are provided in
Appendix~\ref{sec:additional-results}.
Our aim is to provide empirical evidence of practical performance under the stated solver
configurations, rather than to establish general superiority of the underlying algorithm
over any other approaches.

\subsection{Implementation details and default parameters}
\label{sec:impl}

cuPDQPx is implemented in CUDA C and follows the GPU implementation design of cuPDLPx
\citep{lu2025cupdlpx}. More specifically, we incorporate several GPU-specific optimizations, including optimized sparse matrix-vector products (SpMVs), kernel fusion, and CUDA Graphs.
For SpMVs, the problem matrix $A$ and its explicit transpose $A^\top$ are stored in compressed sparse row (CSR) format. We use the cuSPARSE backend for SpMVs, which automatically selects between SpMV and SpMVOp based on the CUDA version, preferring SpMVOp when supported.
We additionally use cuSPARSE preprocessing to reduce the overhead of repeated SpMVs, while all working vectors remain resident in GPU memory.
For kernel fusion, we combine several vector operations that appear within each iteration, including projections, reflections, and Halpern averaging, into fused CUDA kernels. This reduces both kernel-launch overhead and unnecessary accesses to global memory.
Finally, we use CUDA Graphs to reduce the launch overhead associated with repeatedly executing the same sequence of kernels. Between consecutive termination checks, a batch of iterations is captured and executed as a CUDA graph.

\paragraph{SpMV fusion for the quadratic term.}
Compared with cuPDLPx, solving QPs introduces additional SpMVs involving the quadratic matrix $Q$.
In the PDHG step~\eqref{eq:iter}, both $A$ and $Q$ are applied to the same extrapolated point
$2x^{k+1}-x^k$, allowing the two products to be fused into a single SpMV using the vertically
stacked matrix $[A;Q]$.
Multiplying~\eqref{eq:iter-w} by $Q$ allows the implementation to maintain $Qw$ directly
using the already computed $Q(2x^{k+1}-x^k)$, so no additional SpMV with $Q$ is needed.
Together with the SpMV for $A^\top y$ in the primal update, each iteration between termination
checks therefore requires only two SpMVs and no global reduction.

\paragraph{Default parameters.}
Table~\ref{tab:defaults} summarizes the default parameters for the auxiliary-variable scheme
by algorithmic component. Section~\ref{app:diagonal} gives the settings specific to diagonal $Q$.

\begin{table}[htb]
\centering
\caption{Default parameter values for the auxiliary-variable scheme in cuPDQPx.}
\label{tab:defaults}
\small
\begin{tabular}{@{}llp{0.38\linewidth}@{}}
\toprule
Component & Parameters & Default values\\
\midrule
Ruiz equilibration & & $10$ iterations\\
$L_2$ rescaling & & $1$ pass\\
Pock--Chambolle scaling & & $1$ pass ($\alpha=1$)\\
Step-size factor & $\gamma_\eta$ & $0.99$\\
Adaptive restart & $(\beta_1,\beta_2,\beta_3)$ & $(0.2,0.5,0.2)$\\
Quadratic-weight update & $(\gamma_\delta,\kappa_\delta)$
 & $(0.1,1.2)$\\
Primal-weight update & $K_P$ & $1$\\
Termination check frequency & $H$ & $100$\\
\bottomrule
\end{tabular}
\end{table}

The experiments use the specified parameters, with regular termination and restart checks every
$H=100$ iterations. Additional termination checks are performed every
$10$ iterations during the first $1000$ iterations. These early checks can change the reported
termination count but leave the trajectory unchanged, since restarts remain on the
$100$-iteration grid.

\subsection{Experimental setup}
\label{sec:setup}

\paragraph{Benchmark sets.}
We evaluate cuPDQPx on three benchmark sets: the Maros--M\'esz\'aros collection
\citep{maros1999repository}, a convex subset of QPLIB \citep{furini2019qplib},
and synthetic QPs \citep{stellato2020osqp}.
Table~\ref{tab:sets} summarizes the number of instances, the distribution of
diagonal and nondiagonal $Q$, and the ranges of the numbers of variables and
constraints in each benchmark set.
For Maros--M\'esz\'aros and QPLIB, we measure problem size by $n+m$, following
\citep{lu2026pdqp}. Table~\ref{tab:size-groups} defines the small, medium, and large groups
and reports their instance counts.
The synthetic set contains seven problem classes and three size levels, with three instances
per class at each size level, so that the small, medium, and large groups each contain
21 instances. These size levels follow
\citep{lu2026pdqp} and are defined within each problem class, separately from the $n+m$
groups used for Maros--M\'esz\'aros and QPLIB.

\suppressfloats[t]
\begin{table}[htb]
\centering
\caption{Benchmark sets used in the experiments.}
\label{tab:sets}
\small
\setlength{\tabcolsep}{4pt}
\begin{tabular}{lcccrr}
\toprule
Set & Instances & Diagonal $Q$ & Nondiagonal $Q$ &
\multicolumn{1}{c}{$n$ range} & \multicolumn{1}{c}{$m$ range}\\
\midrule
Maros--M\'esz\'aros & 134 & 55 & 79 & 2\,--\,93,263 & 1\,--\,186,531\\
QPLIB, convex subset & 33 & 22 & 11 & 150\,--\,1,009,306 & 5\,--\,1,001,000\\
Synthetic & 63 & 36 & 27 & 502\,--\,424,711 & 81\,--\,143,922\\
\bottomrule
\end{tabular}
\par\smallskip
\begin{minipage}{\linewidth}
\footnotesize
Here, $n$ is the number of variables and $m$ is the number of linear constraints, excluding variable bounds.
\end{minipage}
\end{table}

\begin{table}[htb]
\centering
\caption{Size groups and instance counts for the Maros--M\'esz\'aros and QPLIB sets.}
\label{tab:size-groups}
\small
\setlength{\tabcolsep}{8pt}
\begin{tabular}{lccc}
\toprule
\multirow{2}{*}{Set} & Small & Medium & Large\\
 & $n+m<10^3$ & $10^3\le n+m<5\times10^4$ & $n+m\ge5\times10^4$\\
\midrule
Maros--M\'esz\'aros & 60 & 69 & 5\\
QPLIB, convex subset & 5 & 19 & 9\\
\bottomrule
\end{tabular}
\par\smallskip
\begin{minipage}{\linewidth}
\footnotesize
The size groups use the dimensions of the original problem, before presolve, with variable
bounds excluded from the constraint count. The synthetic size levels are defined separately
within each problem class.
\end{minipage}
\end{table}

\paragraph{Baselines.}
We compare cuPDQPx with HPR-QP-C\insert\footins{\footnotesize
\noindent All solvers were accessed in September 2026.\par
}\footnote{HPR-QP-C: \texttt{v0.1.0}, \url{https://github.com/PolyU-IOR/HPR-QP-C}.}
\citep{chen2025hprqp},
PDHCG\footnote{PDHCG: \texttt{v0.3.1},
\url{https://github.com/Lhongpei/PDHCG}.} \citep{huang2025pdhcg},
PDQP.jl\footnote{PDQP.jl: commit \texttt{135035a},
\url{https://github.com/jinwen-yang/PDQP.jl}.} \citep{lu2026pdqp},
Gurobi barrier\footnote{Gurobi: \texttt{v13.0.3},
\url{https://www.gurobi.com}.} \citep{gurobi2026},
Clarabel\footnote{Clarabel: \texttt{v0.11.1},
\url{https://github.com/oxfordcontrol/Clarabel.jl}.} \citep{goulart2026clarabel},
CuClarabel\footnote{CuClarabel: \texttt{CuClarabel} branch,
commit \texttt{50dff3ec}, \url{https://github.com/oxfordcontrol/Clarabel.jl}.}
\citep{chen2026cuclarabel},
and SCS\footnote{SCS: \texttt{v3.3.1},
\url{https://github.com/cvxgrp/scs}.} \citep{odonoghue2016scs}.
We benchmark Gurobi and Clarabel on the CPU; CuClarabel, HPR-QP-C, PDHCG, PDQP.jl, and cuPDQPx on the GPU; and SCS on both the CPU and GPU.
For Gurobi, we use the barrier method without crossover
(\texttt{Method=2}, \texttt{Crossover=0}), with presolve disabled.
For Clarabel, we use MKL Pardiso on the CPU, while CuClarabel uses cuDSS on the GPU.
For SCS, the CPU configuration uses MKL Pardiso, while the GPU configuration uses cuDSS with 32-bit integers.
Presolve is disabled for PDHCG and for cuPDQPx in the main solver comparisons.
We further evaluate the effect of presolve on cuPDQPx using PSLP\footnote{PSLP:
\texttt{v0.3.0},
\url{https://github.com/dance858/PSLP}.}.
Apart from these settings and the termination-related parameters described below,
all other solver parameters are left at their defaults.
The Gurobi, Clarabel, CuClarabel, and SCS runs use JuMP 1.31.2 and
MathOptInterface (MOI) 1.54.0, with Julia 1.12.5 except for CuClarabel
(Julia 1.11.9).

Among these baselines, HPR-QP-C, PDHCG, PDQP.jl, and cuPDQPx are factorization-free first-order solvers implemented for GPUs. We therefore compare their implementation characteristics in Table~\ref{tab:baseline-implementation}, including diagonal-$Q$ handling, SpMV backends, and regular termination-check intervals.

\suppressfloats[t]
\begin{table}[htb]
\centering
\caption{Implementation characteristics of the compared first-order solvers.}
\label{tab:baseline-implementation}
\small
\begin{tabular}{@{}p{0.11\linewidth}p{0.11\linewidth}p{0.18\linewidth}p{0.33\linewidth}
                  p{\dimexpr0.27\linewidth-8\tabcolsep\relax}@{}}
\toprule
Solver & Language & Separate\newline diagonal-$Q$ path & SpMV backend & \raggedright Check interval\newline
(iterations)\tabularnewline
\midrule
cuPDQPx & CUDA C & Yes
 & \raggedright cuSPARSE SpMV/SpMVOp & $100$\tabularnewline
\addlinespace
HPR-QP-C & CUDA C & Yes
 & \raggedright cuSPARSE / custom SpMV & $100$\tabularnewline
\addlinespace
PDHCG & CUDA C & Yes
 & \raggedright cuSPARSE SpMV/SpMVOp & $200$\tabularnewline
\addlinespace
PDQP.jl & Julia & No
 & \raggedright cuSPARSE SpMV & $96$\tabularnewline
\bottomrule
\end{tabular}
\par\smallskip
\begin{minipage}{\linewidth}
\footnotesize
In HPR-QP-C's default automatic mode, timing trials select the SpMV backend separately for $Q$ and
for the pair $A$, $A^\top$. Intervals refer to regular termination checks;
cuPDQPx and HPR-QP-C also check every $10$ iterations during the first $1000$ iterations.
PDQP.jl additionally checks during the initial iterations.
\end{minipage}
\end{table}

\paragraph{Computing environment.}
All experiments were conducted on a Linux system with a single NVIDIA H100-SXM-80GB GPU
(CUDA 13.3) and Intel Xeon Platinum 8462Y+ processors (16 CPU cores).

\paragraph{Termination criteria.}
We report results at tolerances $\eps\in\{10^{-4},10^{-8}\}$ and impose a
time limit of $3600$ seconds on all runs.
cuPDQPx uses the relative $\ell_\infty$ criteria described in Section~\ref{sec:termination},
while the other solvers use their native termination criteria. The regular termination-check
intervals of the factorization-free first-order solvers are summarized in
Table~\ref{tab:baseline-implementation}.
For cuPDQPx and PDHCG, we set \texttt{eps\_opt} and \texttt{eps\_feas} to $\eps$.
For PDQP.jl, we set \texttt{eps\_\allowbreak optimal\_\allowbreak absolute} and
\texttt{eps\_\allowbreak optimal\_\allowbreak relative} to $\eps$.
For Gurobi, we set \texttt{BarConvTol}, \texttt{FeasibilityTol}, and
\texttt{OptimalityTol} to $\eps$. For Clarabel and CuClarabel, we set
\texttt{tol\_gap\_abs}, \texttt{tol\_gap\_rel}, and \texttt{tol\_feas} to $\eps$.
For SCS, we set \texttt{eps\_abs} and \texttt{eps\_rel} to $\eps$ and remove the iteration limit.
HPR-QP-C is run once per instance with \texttt{stoptol} set to $10^{-8}$.
For each looser tolerance, we report the iteration count and elapsed time at the first
termination check satisfying that tolerance.
An instance is counted as solved if the prescribed termination criterion is satisfied within
the time limit. For solvers other than HPR-QP-C, we require an optimal termination status;
approximate solutions, suboptimal terminations, infeasibility reports, and execution failures
are counted as unsuccessful.
Because the solvers use different stopping criteria, the same tolerance value does not
necessarily imply identical termination accuracy.

\paragraph{Performance measures.}
\label{sec:cost}
We report solved counts and summarize running times using the shifted geometric mean with shift $10$,
$\mathrm{SGM}_{10}=\exp\big(\frac1N\sum_i\ln(v_i+10)\big)-10$.
For running times, $v_i$ is measured in seconds, and each mean includes all instances in the
corresponding group. A run that does not attain the reporting tolerance within the
time limit is assigned $3600$\,s.
For Gurobi, Clarabel, CuClarabel, and SCS, the reported time is MOI's solve time and excludes
file reading. We use this value directly, without adding the separately recorded read time.

We also report $\mathrm{SGM}_{10}$ iteration counts for cuPDQPx, HPR-QP-C, PDHCG, and PDQP.jl.
For each benchmark set and tolerance, the \emph{common solved set} contains the instances
solved within the time limit by all four solvers. All solver means within each group use
this same set. For PDHCG, we count outer iterations.
Reported speedups are ratios of the baseline's $\mathrm{SGM}_{10}$ running time to that of
cuPDQPx, computed from the unrounded values.

\subsection{Benchmark results}
\label{sec:comparison}

Tables~\ref{tab:main},~\ref{tab:qplib-runtime}, and~\ref{tab:synthetic-runtime} compare
the solvers on the Maros--M\'esz\'aros, QPLIB, and synthetic sets, respectively, with
results grouped by problem size as defined in Section~\ref{sec:setup}.
Table~\ref{tab:cupdqpx-presolve} compares cuPDQPx with and without PSLP presolve.
Figure~\ref{fig:performance-profiles} shows the performance profiles
of the solvers on all three benchmark sets.
Appendix~\ref{sec:additional-results} provides additional computational results.

In Tables~\ref{tab:main}--\ref{tab:synthetic-runtime}, Count denotes the number of
solved instances, and Time denotes
the $\mathrm{SGM}_{10}$ running time in seconds. Group sizes appear in parentheses.
Within each group and tolerance, bold indicates the highest solved count or lowest
running time, with all count ties highlighted.

\begin{table}[htbp]
\centering
\caption{Solved instances and running times in seconds on the Maros--M\'esz\'aros set,
at nominal tolerances $\eps$ with a reporting time limit of $3600$\,s.}
\label{tab:main}
\footnotesize
\setlength{\tabcolsep}{3pt}
\begin{tabular}{llrrrrrrrr}
\toprule
\multirow{2}{*}{$\eps$} & \multirow{2}{*}{Solver}
 & \multicolumn{2}{c}{Small (60)} & \multicolumn{2}{c}{Medium (69)}
 & \multicolumn{2}{c}{Large (5)} & \multicolumn{2}{c}{Total (134)}\\
\cmidrule(lr){3-4}\cmidrule(lr){5-6}\cmidrule(lr){7-8}\cmidrule(lr){9-10}
 & & Count & Time & Count & Time & Count & Time & Count & Time\\
\midrule
\multirow{9}{*}{$10^{-4}$} & Gurobi & \textbf{60} & \textbf{0.004}
 & \textbf{69} & \textbf{0.17} & \textbf{5} & \textbf{1.28}
 & \textbf{134} & \textbf{0.14}\\
 & Clarabel & \textbf{60} & 0.009
 & 68 & 1.09 & 4 & 34.09
 & 132 & 1.15\\
 & CuClarabel & \textbf{60} & 0.07
 & 67 & 2.06 & 4 & 25.70
 & 131 & 1.58\\
\cmidrule(lr){2-10}
 & SCS (CPU) & \textbf{60} & 0.03
 & \textbf{69} & 1.48 & 4 & 34.87
 & 133 & 1.37\\
 & SCS (GPU) & \textbf{60} & 0.27
 & \textbf{69} & 0.95 & 4 & 29.59
 & 133 & 1.16\\
 & PDQP.jl & 59 & 6.45
 & 66 & 11.93 & \textbf{5} & 105.63
 & 130 & 10.51\\
 & PDHCG & \textbf{60} & 1.27
 & 62 & 14.74 & \textbf{5} & 35.25
 & 127 & 7.80\\
 & HPR-QP-C & \textbf{60} & 0.18
 & 67 & 12.20 & \textbf{5} & 11.19
 & 132 & 5.63\\
 & cuPDQPx & \textbf{60} & 0.10
 & \textbf{69} & 9.09 & \textbf{5} & 5.29
 & \textbf{134} & 4.24\\
\midrule
\multirow{9}{*}{$10^{-8}$} & Gurobi & \textbf{60} & \textbf{0.004}
 & \textbf{67} & \textbf{2.08} & \textbf{5} & \textbf{1.15}
 & \textbf{132} & \textbf{1.07}\\
 & Clarabel & \textbf{60} & 0.01
 & 59 & 13.96 & 3 & 125.28
 & 122 & 7.29\\
 & CuClarabel & \textbf{60} & 0.08
 & 59 & 13.90 & 3 & 103.53
 & 122 & 7.21\\
\cmidrule(lr){2-10}
 & SCS (CPU) & \textbf{60} & 0.07
 & 63 & 10.32 & 4 & 40.13
 & 127 & 5.34\\
 & SCS (GPU) & \textbf{60} & 0.62
 & 62 & 12.02 & 4 & 36.23
 & 126 & 6.33\\
 & PDQP.jl & 59 & 13.61
 & 57 & 70.62 & \textbf{5} & 695.60
 & 121 & 40.44\\
 & PDHCG & \textbf{60} & 4.09
 & 60 & 26.51 & \textbf{5} & 64.47
 & 125 & 14.48\\
 & HPR-QP-C & \textbf{60} & 0.81
 & 61 & 21.20 & \textbf{5} & 29.29
 & 126 & 9.57\\
 & cuPDQPx & \textbf{60} & 0.27
 & 62 & 16.33 & \textbf{5} & 16.71
 & 127 & 7.28\\
\bottomrule
\end{tabular}
\end{table}

\begin{table}[htbp]
\centering
\caption{Solved instances and running times in seconds on the QPLIB set,
at nominal tolerances $\eps$ with a reporting time limit of $3600$\,s.}
\label{tab:qplib-runtime}
\footnotesize
\setlength{\tabcolsep}{3pt}
\begin{tabular}{llrrrrrrrr}
\toprule
\multirow{2}{*}{$\eps$} & \multirow{2}{*}{Solver}
 & \multicolumn{2}{c}{Small (5)} & \multicolumn{2}{c}{Medium (19)}
 & \multicolumn{2}{c}{Large (9)} & \multicolumn{2}{c}{Total (33)}\\
\cmidrule(lr){3-4}\cmidrule(lr){5-6}\cmidrule(lr){7-8}\cmidrule(lr){9-10}
 & & Count & Time & Count & Time & Count & Time & Count & Time\\
\midrule
\multirow{9}{*}{$10^{-4}$} & Gurobi & \textbf{5} & 0.02
 & \textbf{19} & \textbf{0.41} & \textbf{9} & \textbf{12.05}
 & \textbf{33} & \textbf{2.70}\\
 & Clarabel & \textbf{5} & 0.04
 & 17 & 9.12 & 5 & 191.86
 & 27 & 22.98\\
 & CuClarabel & \textbf{5} & 0.08
 & 18 & 3.94 & 7 & 35.70
 & 30 & 8.35\\
\cmidrule(lr){2-10}
 & SCS (CPU) & \textbf{5} & \textbf{0.02}
 & \textbf{19} & 0.79 & 7 & 71.52
 & 31 & 8.53\\
 & SCS (GPU) & \textbf{5} & 0.07
 & \textbf{19} & 0.68 & \textbf{9} & 39.52
 & \textbf{33} & 6.08\\
 & PDQP.jl & \textbf{5} & 0.25
 & 18 & 13.61 & 6 & 218.76
 & 29 & 28.65\\
 & PDHCG & \textbf{5} & 0.19
 & 18 & 6.28 & 8 & 71.60
 & 31 & 13.54\\
 & HPR-QP-C & \textbf{5} & 0.03
 & \textbf{19} & 2.12 & \textbf{9} & 24.39
 & \textbf{33} & 5.65\\
 & cuPDQPx & \textbf{5} & 0.03
 & \textbf{19} & 1.39 & 8 & 45.02
 & 32 & 7.17\\
\midrule
\multirow{9}{*}{$10^{-8}$} & Gurobi & \textbf{5} & \textbf{0.02}
 & \textbf{19} & \textbf{0.51} & \textbf{9} & \textbf{13.10}
 & \textbf{33} & \textbf{2.93}\\
 & Clarabel & \textbf{5} & 0.09
 & 17 & 9.37 & 5 & 205.87
 & 27 & 23.86\\
 & CuClarabel & \textbf{5} & 0.11
 & 17 & 9.04 & 5 & 153.47
 & 27 & 21.09\\
\cmidrule(lr){2-10}
 & SCS (CPU) & \textbf{5} & 0.04
 & 17 & 10.88 & 4 & 290.36
 & 26 & 28.66\\
 & SCS (GPU) & \textbf{5} & 0.15
 & 17 & 12.26 & 5 & 156.78
 & 27 & 24.23\\
 & PDQP.jl & \textbf{5} & 22.59
 & 18 & 33.47 & 4 & 566.99
 & 27 & 74.24\\
 & PDHCG & 4 & 23.46
 & 18 & 10.73 & 7 & 107.20
 & 29 & 25.75\\
 & HPR-QP-C & \textbf{5} & 1.10
 & 18 & 7.53 & 7 & 156.88
 & 30 & 20.24\\
 & cuPDQPx & \textbf{5} & 0.96
 & 18 & 6.09 & 7 & 97.49
 & 30 & 15.49\\
\bottomrule
\end{tabular}
\end{table}

\begin{table}[htbp]
\centering
\captionsetup{skip=8pt}
\caption{Solved instances and running times in seconds on the synthetic set,
at nominal tolerances $\eps$ with a reporting time limit of $3600$\,s.}
\label{tab:synthetic-runtime}
\setlength{\parskip}{0pt}
\footnotesize
\setlength{\tabcolsep}{3pt}
\begin{tabular}{llrrrrrrrr}
\toprule
\multirow{2}{*}{$\eps$} & \multirow{2}{*}{Solver}
 & \multicolumn{2}{c}{Small (21)} & \multicolumn{2}{c}{Medium (21)}
 & \multicolumn{2}{c}{Large (21)} & \multicolumn{2}{c}{Total (63)}\\
\cmidrule(lr){3-4}\cmidrule(lr){5-6}\cmidrule(lr){7-8}\cmidrule(lr){9-10}
 & & Count & Time & Count & Time & Count & Time & Count & Time\\
\midrule
\multirow{9}{*}{$10^{-4}$} & Gurobi & \textbf{21} & 0.48
 & \textbf{21} & 5.33 & \textbf{21} & 56.04
 & \textbf{63} & 11.98\\
 & Clarabel & \textbf{21} & 0.48
 & \textbf{21} & 3.89 & \textbf{21} & 39.51
 & \textbf{63} & 9.32\\
 & CuClarabel & \textbf{21} & 0.22
 & \textbf{21} & 1.14 & \textbf{21} & 11.76
 & \textbf{63} & 3.53\\
\cmidrule(lr){2-10}
 & SCS (CPU) & \textbf{21} & 0.35
 & \textbf{21} & 4.05 & \textbf{21} & 110.95
 & \textbf{63} & 16.01\\
 & SCS (GPU) & \textbf{21} & 0.27
 & \textbf{21} & 1.50 & \textbf{21} & 13.33
 & \textbf{63} & 4.02\\
 & PDQP.jl & \textbf{21} & 0.83
 & \textbf{21} & 1.77 & \textbf{21} & 5.32
 & \textbf{63} & 2.50\\
 & PDHCG & \textbf{21} & 0.29
 & \textbf{21} & 0.68 & \textbf{21} & 3.13
 & \textbf{63} & 1.30\\
 & HPR-QP-C & \textbf{21} & 0.08
 & \textbf{21} & 0.19 & \textbf{21} & 1.18
 & \textbf{63} & 0.47\\
 & cuPDQPx & \textbf{21} & \textbf{0.03}
 & \textbf{21} & \textbf{0.10} & \textbf{21} & \textbf{1.10}
 & \textbf{63} & \textbf{0.40}\\
\midrule
\multirow{9}{*}{$10^{-8}$} & Gurobi & \textbf{21} & 0.51
 & \textbf{21} & 5.43 & \textbf{21} & 56.32
 & \textbf{63} & 12.07\\
 & Clarabel & \textbf{21} & 0.56
 & \textbf{21} & 4.64 & \textbf{21} & 45.17
 & \textbf{63} & 10.43\\
 & CuClarabel & \textbf{21} & 0.25
 & \textbf{21} & 1.28 & \textbf{21} & 12.34
 & \textbf{63} & 3.72\\
\cmidrule(lr){2-10}
 & SCS (CPU) & \textbf{21} & 0.54
 & \textbf{21} & 8.59 & \textbf{21} & 159.67
 & \textbf{63} & 22.15\\
 & SCS (GPU) & \textbf{21} & 0.57
 & \textbf{21} & 3.73 & \textbf{21} & 30.08
 & \textbf{63} & 7.99\\
 & PDQP.jl & \textbf{21} & 2.26
 & \textbf{21} & 4.65 & \textbf{21} & 10.97
 & \textbf{63} & 5.56\\
 & PDHCG & \textbf{21} & 0.62
 & \textbf{21} & 1.45 & 20 & 12.91
 & 62 & 4.07\\
 & HPR-QP-C & \textbf{21} & 0.24
 & \textbf{21} & 0.65 & \textbf{21} & 4.14
 & \textbf{63} & 1.56\\
 & cuPDQPx & \textbf{21} & \textbf{0.07}
 & \textbf{21} & \textbf{0.29} & \textbf{21} & \textbf{2.55}
 & \textbf{63} & \textbf{0.91}\\
\bottomrule
\end{tabular}
\end{table}

\begin{figure}[p]
\centering
\includegraphics[width=\linewidth]{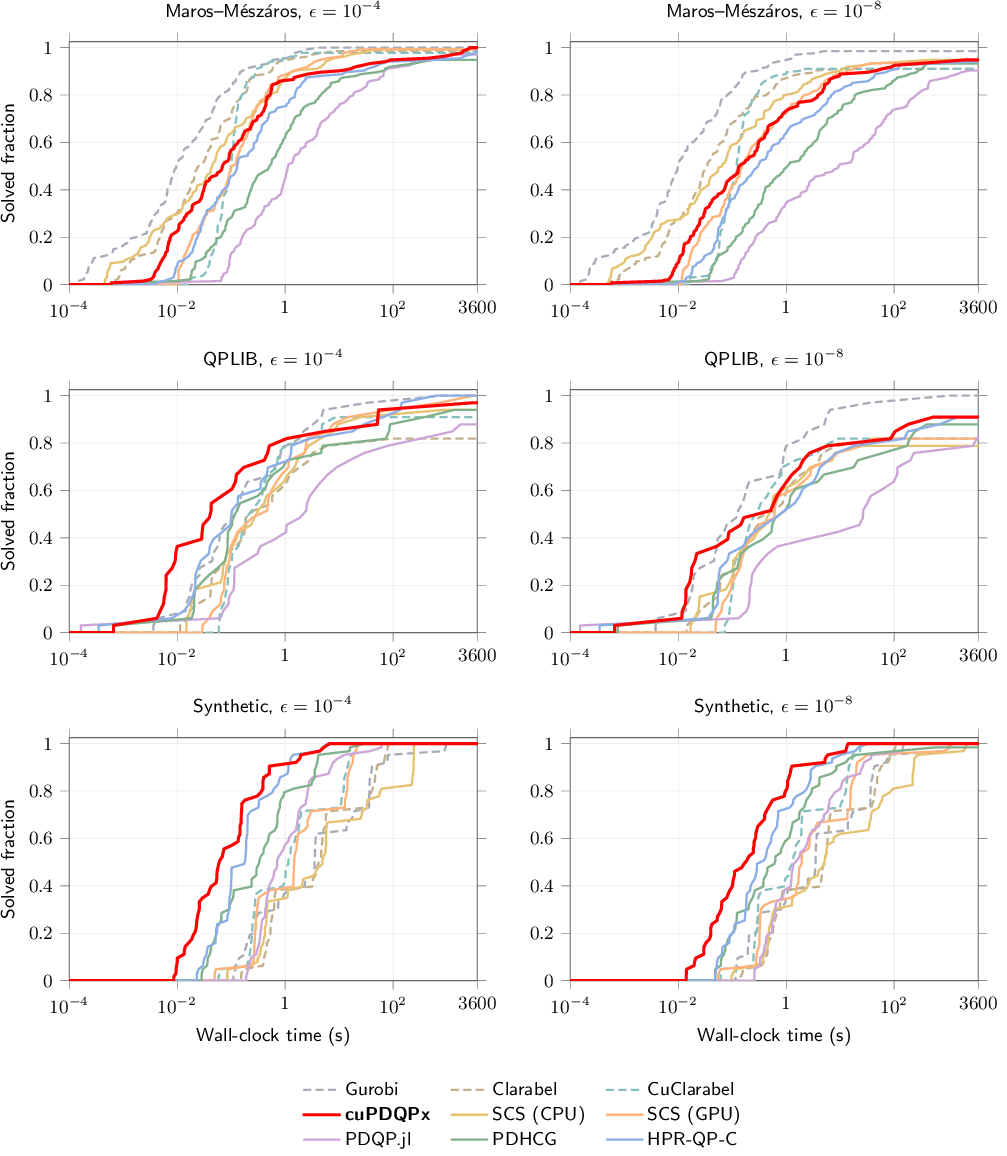}
\caption{Cumulative fractions of solved instances versus wall-clock time at $\eps=10^{-4}$ (left) and
$\eps=10^{-8}$ (right), with a time limit of $3600$\,s.
Each row corresponds to one benchmark set. Horizontal axes are logarithmic and use the
same range in all panels.}
\label{fig:performance-profiles}
\end{figure}

\paragraph{Maros--M\'esz\'aros.}
Gurobi achieves the lowest overall $\mathrm{SGM}_{10}$ running time at both tolerances (Table~\ref{tab:main}). The open-source factorization-based solvers Clarabel and SCS also demonstrate competitive performance, particularly at $\eps=10^{-4}$. This is consistent with the relatively small size of the Maros--M\'esz\'aros instances, for which sparse matrix factorizations are generally inexpensive. Among the four factorization-free GPU solvers, cuPDQPx achieves the lowest overall running time and highest solved count, solving all $134$ instances at $\eps=10^{-4}$ and $127$ at $\eps=10^{-8}$.

\paragraph{QPLIB.}
Gurobi continues to achieve the lowest overall running time, solving all $33$ instances at both tolerances (Table~\ref{tab:qplib-runtime}). Among the open-source solvers, HPR-QP-C and SCS (GPU) achieve lower overall running times than cuPDQPx at $\eps=10^{-4}$. At $\eps=10^{-8}$, however, cuPDQPx achieves the lowest overall running time among all open-source solvers, solving $30$ instances with an $\mathrm{SGM}_{10}$ time of $15.49$\,s.

\paragraph{Synthetic QPs.}
In contrast to the standard benchmark sets, cuPDQPx achieves the lowest $\mathrm{SGM}_{10}$ running time among all tested solvers on the synthetic set, solving all $63$ instances at both tolerances (Table~\ref{tab:synthetic-runtime}). Its advantage is particularly pronounced on larger instances. At $\eps=10^{-8}$, cuPDQPx achieves an $\mathrm{SGM}_{10}$ time of $2.55$\,s on the large group, compared with $56.32$\,s for Gurobi, $12.34$\,s for CuClarabel, and $30.08$\,s for SCS (GPU). Although GPU acceleration substantially improves the performance of Clarabel and SCS relative to their CPU implementations, cuPDQPx remains considerably faster on these instances.

\paragraph{Effect of solution accuracy.}
The solvers exhibit different sensitivities to the termination tolerance. When tightening $\eps$ from $10^{-4}$ to $10^{-8}$, Gurobi's overall $\mathrm{SGM}_{10}$ running time remains nearly unchanged on QPLIB and the synthetic set, consistent with the fast local convergence of interior-point methods. Clarabel exhibits a more pronounced increase on Maros--M\'esz\'aros but relatively little change on the other two sets. In contrast, SCS and the factorization-free GPU solvers generally exhibit larger increases in running time on QPLIB and the synthetic set as the tolerance becomes tighter.

\paragraph{Effect of presolve.}
Table~\ref{tab:cupdqpx-presolve} compares cuPDQPx with and without PSLP presolve at both
tolerances. Total time includes presolve, whereas cuPDQPx time excludes it.
The two configurations solve the same instances in every benchmark set.
On Maros--M\'esz\'aros, total running times are nearly unchanged.
On QPLIB, presolve reduces the total $\mathrm{SGM}_{10}$ time from $7.17$\,s to $6.23$\,s
at $\eps=10^{-4}$, with a smaller reduction at $\eps=10^{-8}$.
On the synthetic set, PSLP leaves most instances unchanged, and the reduction in cuPDQPx
time is too small to offset the additional presolve cost.

\begin{table}[htbp]
\centering
\caption{Solved instances and running times in seconds of cuPDQPx with and without PSLP presolve,
with a reporting time limit of $3600$\,s.}
\label{tab:cupdqpx-presolve}
\small
\setlength{\tabcolsep}{6pt}
\begin{tabular}{@{}lclrrr@{}}
\toprule
Set & $\eps$ & Presolve & Count
 & \multicolumn{1}{c}{Total time} & \multicolumn{1}{c}{cuPDQPx time}\\
\midrule
\multirow{4}{*}[-1.5pt]{Maros--M\'esz\'aros (134)} & \multirow{2}{*}{$10^{-4}$}
 & Disabled & 134 & 4.24 & 4.24\\
 & & Enabled & 134 & 4.23 & 4.23\\
\addlinespace[3pt]
 & \multirow{2}{*}{$10^{-8}$} & Disabled & 127 & 7.28 & 7.28\\
 & & Enabled & 127 & 7.23 & 7.23\\
\midrule
\multirow{4}{*}[-1.5pt]{QPLIB (33)} & \multirow{2}{*}{$10^{-4}$}
 & Disabled & 32 & 7.17 & 7.17\\
 & & Enabled & 32 & 6.23 & 6.20\\
\addlinespace[3pt]
 & \multirow{2}{*}{$10^{-8}$} & Disabled & 30 & 15.49 & 15.49\\
 & & Enabled & 30 & 15.20 & 15.19\\
\midrule
\multirow{4}{*}[-1.5pt]{Synthetic (63)} & \multirow{2}{*}{$10^{-4}$}
 & Disabled & 63 & 0.40 & 0.40\\
 & & Enabled & 63 & 0.85 & 0.37\\
\addlinespace[3pt]
 & \multirow{2}{*}{$10^{-8}$} & Disabled & 63 & 0.91 & 0.91\\
 & & Enabled & 63 & 1.28 & 0.84\\
\bottomrule
\end{tabular}
\end{table}

\paragraph{Additional experimental results.}
Appendix~\ref{sec:additional-results} provides detailed comparisons of cuPDQPx with the
factorization-free GPU solvers PDQP.jl, HPR-QP-C, and PDHCG, with separate results for
diagonal and nondiagonal QPs.
In addition to solved counts and running times, it reports iteration counts on the
common solved sets (Tables~\ref{tab:mm-work},~\ref{tab:qplib-work}, and~\ref{tab:synthetic-work})
and per-iteration computational work measured by SpMV counts (Table~\ref{tab:cost}).
On the common solved sets, cuPDQPx achieves the lowest overall $\mathrm{SGM}_{10}$ iteration
count among these four solvers in five of the six benchmark--tolerance
combinations. Its iteration advantage is more consistent for nondiagonal $Q$, where it has
the lowest mean across all three benchmark sets at both tolerances.
On the nondiagonal subsets of Maros--M\'esz\'aros and QPLIB, cuPDQPx also achieves lower
$\mathrm{SGM}_{10}$ running times than the other three solvers at both tolerances.

\section{Conclusion}
\label{sec:conclusion}

We presented cuPDQPx, a GPU-accelerated, factorization-free solver for large-scale convex QP based on restarted reflected Halpern PDHG. An auxiliary-variable splitting yields explicit primal--dual updates for nondiagonal $Q$ without linearizing the quadratic objective or solving inner quadratic subproblems, while a closed-form proximal update handles diagonal $Q$. We established a preconditioned proximal point interpretation and an $O(1/k)$ fixed-point residual bound under fixed parameters and a fixed anchor. The solver further combines adaptive restarts, primal and quadratic weight updates, and efficient GPU implementation techniques.

Numerical experiments on Maros--M\'esz\'aros, QPLIB, and synthetic QPs demonstrate the competitive performance of cuPDQPx. While Gurobi barrier achieves the lowest overall running times on the standard benchmark sets, cuPDQPx outperforms the other factorization-free GPU solvers on the nondiagonal-$Q$ subsets of both sets. On the synthetic set, cuPDQPx achieves the lowest shifted geometric mean running time among all tested solvers at both tolerances, highlighting the potential of factorization-free GPU methods for large-scale QP.

Our experiments focus on feasible, bounded QPs, and cuPDQPx does not yet support infeasibility detection. Future work will investigate more robust adaptive weight updates, particularly under stagnation, and extend the solver to detect infeasibility.

\section*{Acknowledgement}
Haihao Lu is partially supported by AFOSR Grant No. FA9550-24-1-0051, ONR Grant No. N000142412735, and a Sloan Research Fellowship.
Zedong Peng is partially supported by ONR Grant No. N000142412735. Jinwen Yang is partially supported by AFOSR Grant No. FA9550-24-1-0051. The authors gratefully acknowledge the support of the NVIDIA Academic Grant Program.

\bibliographystyle{plainnat}
\bibliography{refs}

@article{lu2025cupdlpjl,
  title   = {{cuPDLP}.jl: A {GPU} implementation of restarted primal-dual hybrid gradient for linear programming in {Julia}},
  author  = {Lu, Haihao and Yang, Jinwen},
  journal = {Operations Research},
  volume  = {73},
  number  = {6},
  pages   = {3440--3452},
  year    = {2025},
  doi     = {10.1287/opre.2024.1069}
}

@article{lu2023cupdlpc,
  title={{cuPDLP-C}: A Strengthened Implementation of {cuPDLP} for Linear Programming by {C} language},
  author={Lu, Haihao and Yang, Jinwen and Hu, Haodong and Huangfu, Qi and Liu, Jinsong and Liu, Tianhao and Ye, Yinyu and Zhang, Chuwen and Ge, Dongdong},
  journal={arXiv preprint arXiv:2312.14832},
  year={2023}
}

@article{applegate2021pdlp,
  title   = {Practical large-scale linear programming using primal-dual hybrid gradient},
  author  = {Applegate, David and D{\'\i}az, Mateo and Hinder, Oliver and Lu, Haihao and Lubin, Miles and O'Donoghue, Brendan and Schudy, Warren},
  journal = {Advances in Neural Information Processing Systems},
  volume  = {34},
  pages   = {20243--20257},
  year    = {2021}
}

@article{chambolle2011first,
  title   = {A first-order primal-dual algorithm for convex problems with applications to imaging},
  author  = {Chambolle, Antonin and Pock, Thomas},
  journal = {Journal of Mathematical Imaging and Vision},
  volume  = {40},
  pages   = {120--145},
  year    = {2011}
}

@incollection{robinson1981polyhedral,
  author    = {Robinson, Stephen M.},
  title     = {Some Continuity Properties of Polyhedral Multifunctions},
  booktitle = {Mathematical Programming at Oberwolfach},
  series    = {Mathematical Programming Studies},
  volume    = {14},
  pages     = {206--214},
  year      = {1981},
  publisher = {Springer},
  doi       = {10.1007/BFb0120929}
}

@inproceedings{pock2011diagonal,
  title     = {Diagonal preconditioning for first order primal-dual algorithms in convex optimization},
  author    = {Pock, Thomas and Chambolle, Antonin},
  booktitle = {2011 International Conference on Computer Vision},
  pages     = {1762--1769},
  year      = {2011}
}

@article{lu2026pdqp,
  title   = {A practical and optimal first-order method for large-scale convex quadratic programming},
  author  = {Lu, Haihao and Yang, Jinwen},
  journal = {Mathematical Programming},
  volume  = {215},
  number  = {1-2},
  pages   = {771--808},
  year    = {2026},
  doi     = {10.1007/s10107-025-02241-0}
}

@article{lu2023unified,
  title   = {On a unified and simplified proof for the ergodic convergence rates of
             {PPM}, {PDHG} and {ADMM}},
  author  = {Lu, Haihao and Yang, Jinwen},
  journal = {arXiv preprint arXiv:2305.02165},
  year    = {2023}
}

@article{lu2024halpern,
  title   = {Restarted {Halpern} {PDHG} for linear programming},
  author  = {Lu, Haihao and Yang, Jinwen},
  journal = {arXiv preprint arXiv:2407.16144},
  year    = {2024}
}

@article{lu2025cupdlpx,
  title   = {{cuPDLPx}: A further enhanced {GPU}-based first-order solver for linear programming},
  author  = {Lu, Haihao and Peng, Zedong and Yang, Jinwen},
  journal = {arXiv preprint arXiv:2507.14051},
  year    = {2025}
}

@article{chen2025hprqp,
  title   = {{HPR-QP}: A dual {Halpern} {Peaceman--Rachford} method for solving large-scale convex composite quadratic programming},
  author  = {Chen, Kaihuang and Sun, Defeng and Yuan, Yancheng and Zhang, Guojun and Zhao, Xinyuan},
  journal = {arXiv preprint arXiv:2507.02470},
  year    = {2025}
}

@article{chen2026hprlp,
  title   = {{HPR-LP}: An implementation of an {HPR} method for solving linear programming},
  author  = {Chen, Kaihuang and Sun, Defeng and Yuan, Yancheng and Zhang, Guojun and Zhao, Xinyuan},
  journal = {Mathematical Programming Computation},
  volume  = {18},
  number  = {1},
  pages   = {183--210},
  year    = {2026},
  doi     = {10.1007/s12532-025-00292-0}
}

@article{huang2025pdhcg,
  title   = {A restarted primal-dual hybrid conjugate gradient method for large-scale quadratic programming},
  author  = {Huang, Yicheng and Zhang, Wanyu and Li, Hongpei and Ge, Dongdong and Liu, Huikang and Ye, Yinyu},
  journal = {INFORMS Journal on Computing},
  year    = {2025},
  doi     = {10.1287/ijoc.2024.0983}
}

@article{li2026pdhcgcqp,
  title   = {{GPU}-accelerated conic quadratic programming with local linear convergence under strict complementarity},
  author  = {Li, Hongpei and Huang, Yicheng and Liu, Huikang and Ge, Dongdong and Ye, Yinyu},
  journal = {arXiv preprint arXiv:2608.09159},
  year    = {2026}
}

@article{lieder2021halpern,
  title   = {On the convergence rate of the {Halpern}-iteration},
  author  = {Lieder, Felix},
  journal = {Optimization Letters},
  volume  = {15},
  number  = {2},
  pages   = {405--418},
  year    = {2021}
}

@article{maros1999repository,
  title   = {A repository of convex quadratic programming problems},
  author  = {Maros, Istv{\'a}n and M{\'e}sz{\'a}ros, Csaba},
  journal = {Optimization Methods and Software},
  volume  = {11},
  number  = {1-4},
  pages   = {671--681},
  year    = {1999}
}

@article{furini2019qplib,
  title   = {{QPLIB}: a library of quadratic programming instances},
  author  = {Furini, Fabio and Traversi, Emiliano and Belotti, Pietro and Frangioni, Antonio and Gleixner, Ambros and Gould, Nick and Liberti, Leo and Lodi, Andrea and Misener, Ruth and Mittelmann, Hans and Sahinidis, Nikolaos V. and Vigerske, Stefan and Wiegele, Angelika},
  journal = {Mathematical Programming Computation},
  volume  = {11},
  number  = {2},
  pages   = {237--265},
  year    = {2019}
}

@article{wolfe1961duality,
  title   = {A duality theorem for non-linear programming},
  author  = {Wolfe, Philip},
  journal = {Quarterly of Applied Mathematics},
  volume  = {19},
  number  = {3},
  pages   = {239--244},
  year    = {1961}
}

@article{stellato2020osqp,
  title   = {{OSQP}: an operator splitting solver for quadratic programs},
  author  = {Stellato, Bartolomeo and Banjac, Goran and Goulart, Paul and Bemporad, Alberto and Boyd, Stephen},
  journal = {Mathematical Programming Computation},
  volume  = {12},
  number  = {4},
  pages   = {637--672},
  year    = {2020}
}

@misc{gurobi2026,
  author = {{Gurobi Optimization, LLC}},
  title  = {{Gurobi Optimizer Reference Manual}},
  year   = {2026},
  url    = {https://www.gurobi.com}
}

@article{goulart2026clarabel,
  title   = {{Clarabel}: An interior-point solver for conic programs with quadratic objectives},
  author  = {Goulart, Paul J. and Chen, Yuwen},
  journal = {Mathematical Programming Computation},
  year    = {2026},
  doi     = {10.1007/s12532-026-00320-7}
}

@article{chen2026cuclarabel,
  title   = {{CuClarabel}: {GPU} acceleration for a conic optimization solver},
  author  = {Chen, Yuwen and Tse, Danny and Nobel, Parth and Goulart, Paul and Boyd, Stephen},
  journal = {ACM Transactions on Mathematical Software},
  volume  = {52},
  number  = {3},
  pages   = {1--25},
  year    = {2026},
  doi     = {10.1145/3815420}
}

@article{odonoghue2016scs,
  title   = {Conic Optimization via Operator Splitting and Homogeneous Self-Dual Embedding},
  author  = {O'Donoghue, Brendan and Chu, Eric and Parikh, Neal and Boyd, Stephen},
  journal = {Journal of Optimization Theory and Applications},
  volume  = {169},
  number  = {3},
  pages   = {1042--1068},
  year    = {2016}
}

\clearpage
\appendix

\clearpage
\section{Additional computational results}
\label{sec:additional-results}

Tables~\ref{tab:mm-runtime-structure} and~\ref{tab:qplib-runtime-structure} report solved
counts and running times for Maros--M\'esz\'aros and QPLIB, grouped by the structure of $Q$.
They use the same runs and reporting conventions as the size-grouped results in
Section~\ref{sec:comparison}.

\begin{table}[htbp]
\centering
\caption{Solved instances and running times on the Maros--M\'esz\'aros set, grouped by the structure of $Q$,
at nominal tolerances $\eps$ with a reporting time limit of $3600$\,s.}
\label{tab:mm-runtime-structure}
\footnotesize
\setlength{\tabcolsep}{3pt}
\begin{tabular}{llrrrrrr}
\toprule
\multirow{2}{*}{$\eps$} & \multirow{2}{*}{Solver}
 & \multicolumn{2}{c}{Diagonal $Q$ (55)} & \multicolumn{2}{c}{Nondiagonal $Q$ (79)} & \multicolumn{2}{c}{Total (134)}\\
\cmidrule(lr){3-4}\cmidrule(lr){5-6}\cmidrule(lr){7-8}
 & & Count & Time & Count & Time & Count & Time\\
\midrule
\multirow{9}{*}{$10^{-4}$} & Gurobi & \textbf{55} & \textbf{0.21} & \textbf{79} & 0.09
 & \textbf{134} & \textbf{0.14}\\
 & Clarabel & 53 & 2.91 & \textbf{79} & \textbf{0.07} & 132 & 1.15\\
 & CuClarabel & 52 & 4.10 & \textbf{79} & 0.10 & 131 & 1.58\\
\cmidrule(lr){2-8}
 & SCS (CPU) & 54 & 3.39 & \textbf{79} & 0.14 & 133 & 1.37\\
 & SCS (GPU) & 54 & 2.48 & \textbf{79} & 0.33 & 133 & 1.16\\
 & PDQP.jl & 52 & 17.04 & 78 & 6.92 & 130 & 10.51\\
 & PDHCG & 48 & 21.51 & \textbf{79} & 1.95 & 127 & 7.80\\
 & HPR-QP-C & 53 & 17.76 & \textbf{79} & 0.48 & 132 & 5.63\\
 & cuPDQPx & \textbf{55} & 13.00 & \textbf{79} & 0.20 & \textbf{134} & 4.24\\
\midrule
\multirow{9}{*}{$10^{-8}$} & Gurobi & \textbf{54} & \textbf{1.36} & 78 & 0.87
 & \textbf{132} & \textbf{1.07}\\
 & Clarabel & 44 & 23.71 & 78 & 0.87 & 122 & 7.29\\
 & CuClarabel & 44 & 23.13 & 78 & 0.91 & 122 & 7.21\\
\cmidrule(lr){2-8}
 & SCS (CPU) & 48 & 16.13 & \textbf{79} & 0.59 & 127 & 5.34\\
 & SCS (GPU) & 47 & 16.76 & \textbf{79} & 1.58 & 126 & 6.33\\
 & PDQP.jl & 43 & 102.97 & 78 & 18.78 & 121 & 40.44\\
 & PDHCG & 46 & 33.00 & \textbf{79} & 6.54 & 125 & 14.48\\
 & HPR-QP-C & 47 & 31.99 & \textbf{79} & 1.51 & 126 & 9.57\\
 & cuPDQPx & 48 & 25.03 & \textbf{79} & \textbf{0.57} & 127 & 7.28\\
\bottomrule
\end{tabular}
\end{table}

\begin{table}[htbp]
\centering
\caption{Solved instances and running times on the QPLIB set, grouped by the structure of $Q$,
at nominal tolerances $\eps$ with a reporting time limit of $3600$\,s.}
\label{tab:qplib-runtime-structure}
\footnotesize
\setlength{\tabcolsep}{3pt}
\begin{tabular}{llrrrrrr}
\toprule
\multirow{2}{*}{$\eps$} & \multirow{2}{*}{Solver}
 & \multicolumn{2}{c}{Diagonal $Q$ (22)} & \multicolumn{2}{c}{Nondiagonal $Q$ (11)} & \multicolumn{2}{c}{Total (33)}\\
\cmidrule(lr){3-4}\cmidrule(lr){5-6}\cmidrule(lr){7-8}
 & & Count & Time & Count & Time & Count & Time\\
\midrule
\multirow{9}{*}{$10^{-4}$} & Gurobi & \textbf{22} & \textbf{3.91} & \textbf{11} & 0.59
 & \textbf{33} & \textbf{2.70}\\
 & Clarabel & 16 & 49.09 & \textbf{11} & 0.28 & 27 & 22.98\\
 & CuClarabel & 20 & 8.86 & 10 & 7.36 & 30 & 8.35\\
\cmidrule(lr){2-8}
 & SCS (CPU) & 20 & 14.55 & \textbf{11} & 0.55 & 31 & 8.53\\
 & SCS (GPU) & \textbf{22} & 9.96 & \textbf{11} & 0.45 & \textbf{33} & 6.08\\
 & PDQP.jl & 18 & 51.66 & \textbf{11} & 5.19 & 29 & 28.65\\
 & PDHCG & 20 & 24.29 & \textbf{11} & 1.08 & 31 & 13.54\\
 & HPR-QP-C & \textbf{22} & 9.10 & \textbf{11} & 0.52 & \textbf{33} & 5.65\\
 & cuPDQPx & 21 & 12.26 & \textbf{11} & \textbf{0.21} & 32 & 7.17\\
\midrule
\multirow{9}{*}{$10^{-8}$} & Gurobi & \textbf{22} & \textbf{4.20} & \textbf{11} & 0.73
 & \textbf{33} & \textbf{2.93}\\
 & Clarabel & 16 & 51.12 & \textbf{11} & \textbf{0.39} & 27 & 23.86\\
 & CuClarabel & 17 & 31.54 & 10 & 7.41 & 27 & 21.09\\
\cmidrule(lr){2-8}
 & SCS (CPU) & 16 & 43.32 & 10 & 10.33 & 26 & 28.66\\
 & SCS (GPU) & 17 & 31.88 & 10 & 12.86 & 27 & 24.23\\
 & PDQP.jl & 16 & 104.93 & \textbf{11} & 35.25 & 27 & 74.24\\
 & PDHCG & 19 & 31.56 & 10 & 16.45 & 29 & 25.75\\
 & HPR-QP-C & 19 & 37.85 & \textbf{11} & 2.08 & 30 & 20.24\\
 & cuPDQPx & 19 & 28.85 & \textbf{11} & 0.97 & 30 & 15.49\\
\bottomrule
\end{tabular}
\end{table}

Table~\ref{tab:cost} summarizes the SpMVs per iteration, grouped by the structure of $Q$.

\begin{table}[htbp]
\centering
\caption{Sparse matrix--vector products per iteration.}
\label{tab:cost}
\small
\begin{tabular}{llccccc}
\toprule
$Q$ structure & Solver & $A$ & $A^\top$ & $Q$ & $[A;Q]$ & Total\\
\midrule
\multirow{4}{*}{Nondiagonal}
 & cuPDQPx & 0 & 1 & 0 & 1 & 2\\
 & HPR-QP-C & 1 & 1 & 2 & 0 & 4\\
 & PDQP.jl & 1 & 1 & 1 & 0 & 3\\
 & PDHCG & 1 & 1 & $j$ & 0 & $j+2$\\
\midrule
\multirow{4}{*}{Diagonal}
 & cuPDQPx & 1 & 1 & 0 & 0 & 2\\
 & HPR-QP-C & 1 & 1 & 2 & 0 & 4\\
 & PDQP.jl & 1 & 1 & 1 & 0 & 3\\
 & PDHCG & 1 & 1 & 0 & 0 & 2\\
\bottomrule
\end{tabular}
\par\smallskip
\begin{minipage}{\linewidth}
\footnotesize
Counts refer to cuSPARSE SpMV/SpMVOp calls or custom SpMV kernel launches. They exclude
vector-only kernels, termination checks, restarts, and elementwise products with diagonal $Q$.
For PDHCG, $j$ is the number of inner solver steps per outer iteration, including the initial
step.
HPR-QP-C counts refer to the standard branch; its alternative branch, used when more than
$80\%$ of variables have neither a finite lower nor a finite upper bound, requires one fewer
product with $Q$.
\end{minipage}
\end{table}

Tables~\ref{tab:mm-work},~\ref{tab:qplib-work}, and~\ref{tab:synthetic-work} report iteration
counts on the common solved sets defined in Section~\ref{sec:cost}.
At $\eps=10^{-4}$, the common sets contain $123$, $29$, and $63$ instances for
Maros--M\'esz\'aros, QPLIB, and Synthetic, respectively;
at $\eps=10^{-8}$, they contain $121$, $26$, and $62$ instances.
These iteration comparisons complement the solved counts and running times in
Tables~\ref{tab:main},~\ref{tab:qplib-runtime}, and~\ref{tab:synthetic-runtime}.

\begin{table}[htbp]
\centering
\caption{Iteration counts on the Maros--M\'esz\'aros set,
reported as $\mathrm{SGM}_{10}$ over instances solved by all four solvers within $3600$\,s.}
\label{tab:mm-work}
\small
\setlength{\tabcolsep}{4pt}
\begin{tabular}{llrrr}
\toprule
$\eps$ & Solver & Diagonal $Q$ & Nondiagonal $Q$ & Total\\
\midrule
\multirow{5}{*}{$10^{-4}$} & \textcolor{black!75}{Common}
 & \textcolor{black!75}{$N=45$} & \textcolor{black!75}{$N=78$}
 & \textcolor{black!75}{$N=123$}\\
\addlinespace[3pt]
 & PDQP.jl & 6891 & 5531 & 5995\\
 & PDHCG & 8430 & 2567 & 3968\\
 & HPR-QP-C & 6948 & 2536 & 3668\\
 & cuPDQPx & \textbf{5691} & \textbf{2308} & \textbf{3212}\\
\midrule
\multirow{5}{*}{$10^{-8}$} & \textcolor{black!75}{Common}
 & \textcolor{black!75}{$N=43$} & \textcolor{black!75}{$N=78$}
 & \textcolor{black!75}{$N=121$}\\
\addlinespace[3pt]
 & PDQP.jl & 23269 & 17296 & 19219\\
 & PDHCG & 19936 & 7718 & 10815\\
 & HPR-QP-C & 17111 & 7523 & 10075\\
 & cuPDQPx & \textbf{14340} & \textbf{6321} & \textbf{8458}\\
\bottomrule
\end{tabular}
\par\smallskip
\begin{minipage}{\linewidth}
\footnotesize
$N$ is the number of common solved instances at the indicated tolerance.
Zero-iteration solves are included.
Bold indicates the lowest mean for each group and tolerance.
\end{minipage}
\end{table}

\begin{table}[htbp]
\centering
\caption{Iteration counts on the QPLIB set,
reported as $\mathrm{SGM}_{10}$ over instances solved by all four solvers within $3600$\,s.}
\label{tab:qplib-work}
\small
\setlength{\tabcolsep}{4pt}
\begin{tabular}{llrrr}
\toprule
$\eps$ & Solver & Diagonal $Q$ & Nondiagonal $Q$ & Total\\
\midrule
\multirow{5}{*}{$10^{-4}$} & \textcolor{black!75}{Common}
 & \textcolor{black!75}{$N=18$} & \textcolor{black!75}{$N=11$}
 & \textcolor{black!75}{$N=29$}\\
\addlinespace[3pt]
 & PDQP.jl & 8439 & 2244 & 5109\\
 & PDHCG & 6495 & 1843 & 4030\\
 & HPR-QP-C & \textbf{5842} & 1711 & \textbf{3669}\\
 & cuPDQPx & 6353 & \textbf{1552} & 3725\\
\midrule
\multirow{5}{*}{$10^{-8}$} & \textcolor{black!75}{Common}
 & \textcolor{black!75}{$N=16$} & \textcolor{black!75}{$N=10$}
 & \textcolor{black!75}{$N=26$}\\
\addlinespace[3pt]
 & PDQP.jl & 17588 & 9098 & 13650\\
 & PDHCG & 10287 & 8258 & 9453\\
 & HPR-QP-C & \textbf{8084} & 6274 & 7333\\
 & cuPDQPx & 8777 & \textbf{4946} & \textbf{7040}\\
\bottomrule
\end{tabular}
\par\smallskip
\begin{minipage}{\linewidth}
\footnotesize
$N$ is the number of common solved instances at the indicated tolerance.
Zero-iteration solves are included.
Bold indicates the lowest mean for each group and tolerance.
\end{minipage}
\end{table}

\begin{table}[htbp]
\centering
\caption{Iteration counts on the synthetic set,
reported as $\mathrm{SGM}_{10}$ over instances solved by all four solvers within $3600$\,s.}
\label{tab:synthetic-work}
\small
\setlength{\tabcolsep}{4pt}
\begin{tabular}{llrrr}
\toprule
$\eps$ & Solver & Diagonal $Q$ & Nondiagonal $Q$ & Total\\
\midrule
\multirow{5}{*}{$10^{-4}$} & \textcolor{black!75}{Common}
 & \textcolor{black!75}{$N=36$} & \textcolor{black!75}{$N=27$}
 & \textcolor{black!75}{$N=63$}\\
\addlinespace[3pt]
 & PDQP.jl & 1443 & 7549 & 2936\\
 & PDHCG & 954 & 1611 & 1194\\
 & HPR-QP-C & 933 & 1603 & 1177\\
 & cuPDQPx & \textbf{645} & \textbf{1270} & \textbf{863}\\
\midrule
\multirow{5}{*}{$10^{-8}$} & \textcolor{black!75}{Common}
 & \textcolor{black!75}{$N=36$} & \textcolor{black!75}{$N=26$}
 & \textcolor{black!75}{$N=62$}\\
\addlinespace[3pt]
 & PDQP.jl & 2923 & 15521 & 5890\\
 & PDHCG & 2152 & 5587 & 3212\\
 & HPR-QP-C & 2261 & 5330 & 3240\\
 & cuPDQPx & \textbf{1530} & \textbf{4215} & \textbf{2342}\\
\bottomrule
\end{tabular}
\par\smallskip
\begin{minipage}{\linewidth}
\footnotesize
$N$ is the number of common solved instances at the indicated tolerance.
Zero-iteration solves are included.
Bold indicates the lowest mean for each group and tolerance.
\end{minipage}
\end{table}

\end{document}